\documentclass[letterpaper]{article}
\usepackage{amsgen, amsmath, amsfonts, amsthm, amssymb, enumerate,amscd}
\usepackage[all]{xy}
\input xy
\xyoption{all}

\newtheorem{defn}{Definition}[section]
\newtheorem{prop}[defn]{Proposition}
\newtheorem{lem}[defn]{Lemma}
\newtheorem{thm}[defn]{Theorem}
\newtheorem{cor}[defn]{Corollary}
\newtheorem{rem}[defn]{Remark}

\newcommand {\ZZ}{{\mathbb Z}}

\newcommand {\X}{{\bar {X}}}

\newcommand {\C}{{\mathbb C}}

\newcommand {\A}{{\mathcal A}}

\newcommand {\Q}{{\mathbb Q}}
\newcommand {\R}{{\mathbb R}}

\newcommand {\HH}{{\mathfrak  H}}

\newcommand {\m}{{\mathfrak m}}

\newcommand {\CP}{{\mathbb P}}

\newcommand {\I}{{I}}

\newcommand {\ca}{{\mathbf a}}

\newcommand {\AD}{{\mathcal A}^{\bullet}}
\newcommand {\ED}{E^{\bullet}}

\newcommand {\w}{{\omega}}

\newcommand {\MZ}{{\mathcal Z}}

\def\deg{\operatorname{deg}}

\def\Hom{\operatorname{Hom}}
\title{Higher order modular forms and mixed Hodge theory}
\author{Ramesh Sreekantan (Tata Institute of Fundamental Research)}

\begin{document}
\baselineskip=17pt  
\maketitle
\begin{abstract}
In this paper we introduce a certain space of higher order modular
forms of weight $0$ and show that it has a Hodge structure coming
from the geometry of the fundamental group of a modular curve. This
generalises the usual structure on classical weight $2$ forms coming
from the cohomology of the modular curve. Further we construct some
higher order Poincar\'{e} series to get higher order higher weight
forms and using them we define a space of higher weight, higher
order forms which has a mixed Hodge structure as well.
\end{abstract}

\noindent {\bf Mathematics Subject ClassiÞcation (2000) : }11F11, 14C30, 30F30, 32S35. 

\noindent {\bf Keywords.} Higher-order modular forms, Chen iterated integrals, mixed Hodge structures, Poincar\'{e} series.

\vspace{1cm}

In the theory of automorphic forms, the classical holomorphic
modular forms and their complex conjugates play a special role -
they are the forms most closely linked to geometry. Spaces of such
modular forms can be identified with the cohomology groups of
certain locally constant sheaves on modular curves. From this point
of view certain things, like Hecke operators, become very natural
and this is the first step in associating a motive to a modular form.

Second order modular forms were introduced by Goldfeld and have been
studied in some detail by several people. Examples of such forms we
also discovered in a different context by Kleban and Zagier
\cite{klza}. In \cite{chdios}, the notion of a higher order
automorphic form was considered and the spaces of such forms were
studied. One can speculate as to whether there is any geometry
underlying these spaces of automorphic forms. In general, these spaces
can be rather large and one cannot expect much. However, in this
paper we consider certain subspaces with additional structure that
can be understood as the generalization of the space of classical
holomorphic modular forms. We show that these spaces can be
identified with certain spaces coming from the fundamental groups of
modular curves.

In weight $0$, the spaces we consider are the spaces of
anti-derivatives of iterated integrals of smooth 1-forms. In the
classical situation the Eichler-Shimura isomorphism identifies the
spaces of holomorphic modular forms along with the conjugates of cusp
forms with the cohomology groups of locally constant sheaves on a modular curve and
these spaces have mixed Hodge structures. We show that our  spaces
can be identified with graded quotients of the dual of the group
ring of the fundamental group of the modular curve generalizing the
Eichler-Shimura theorem. These quotients have mixed Hodge structures
due to Hain \cite{hain} and Morgan. Using this, we can define a
mixed Hodge structure on the space of weight $0$ higher order forms.
In general, this Hodge structure depends on the base point. Deligne and Goncharov \cite{dego}
show that this Hodge structure is the same as that on a cohomology group of a pair.

For higher weights, we generalize a construction of Goldfeld and
O'Sullivan \cite{goos} using twisted Poincar\'{e} series to construct 
some higher order higher weight forms. We show
that, when the weight $k>2$, these spaces of such forms also have a mixed Hodge
structure.

One purpose of relating the higher order forms to the geometry of
the modular curve is to define a Hecke theory. Classically the Hecke
operators are the operators induced on the cohomology groups by
certain algebraic correspondences. In the higher order case,
however, one cannot use this as Hecke correspondences do not induce
maps between homotopy groups or on the corresponding Hodge
structures. This perhaps explains why there is no satisfactory Hecke
theory for higher order forms.

In weight $0$, much of the theory is analogous to the theory of
Multiple Zeta values and the geometry of
$\mathbb{P}^1-\{0,1,\infty\}$ due to Deligne, Goncharov \cite{dego}
and others. From this point of view the higher order modular forms
can be viewed as generalizations of the single variable multiple
polylogarithms.

{\em Acknowledgements} I would like to thank Nikolaos Diamantis for
careful reading and comments on several earlier versions of this
manuscript. I would also like to thank G.V. Ravindra, J.G. Biswas
and D. Pancholi  and N.Fakhruddin for their comments and help. I would like to thank the
Max-Planck Institute in Bonn and the TIFR Centre in Bangalore for
their hospitality when this work was done. Finally I would like to thank the referee
for suggesting a considerably simpler argument for the main theorem.

\section{Higher order automorphic  forms}

Let $\Gamma$ be a discrete subgroup of $SL_2(\R)$ with no elliptic
fixed points, so $\Gamma$ is isomorphic to the fundamental group of
$X=\Gamma \backslash \HH$, $\pi_1(X, x_0)$ at some point $x_0 \in X$. Let
$\ZZ[\Gamma]$ be the group ring and $J=J_{x_0}=<\gamma-1>$ the augmentation
ideal of $\ZZ[\Gamma]$ which fits in the exact sequence.
$$0 \rightarrow J \rightarrow \ZZ[\Gamma] \stackrel{\deg}{\rightarrow}
\ZZ \rightarrow 0$$
If $f:\HH \rightarrow \C$ is a function  and $k \in \ZZ$ define
$$(f|_k{\gamma})(z)=j(\gamma,z)^{-k}f(\gamma z)$$
where $\gamma=\begin{pmatrix} a & b\\c &d \end{pmatrix}$ is in
$SL_2(\R)$ and $j(\gamma,z)=(cz+d)$ is the usual automorphy factor.
We extend this to an action of $\ZZ[\Gamma]$ on the space of
functions by defining
$$f|_k{\sum a_i \gamma_i}=\sum a_i f|_{k}\gamma_i$$
An {\em automorphic form of weight $k $ for $\Gamma$} is a function
such that
$$f|_k \gamma=0$$
for all $\gamma$ in $J$. More generally, for $s \in {\mathbb N}$,  we define a {\em higher
order automorphic form of weight $k$ and order $s$ for $\Gamma$} to
be a function $f:\HH \rightarrow \C$ such that
$$f|_k \gamma=0$$
for all $\gamma$ in $J^s$. Let $M_k^s=M_k^s(\Gamma)$ denote the
space of higher order automorphic forms of order $s$. We further define $M_k^0(\Gamma)$ to be the constants $\C$.
For $s=1$ with the added conditions of holomorphy and growth at the cusps,  gives  the classical modular forms of weight $k$. 
For a fixed weight $k$ there is a natural filtration by order
$$M_k^0 \subseteq M_k^1 \subseteq M_k^2 \dots M_k^{s} \subseteq M_k^{s+1} \dots$$
These spaces can be rather large, so one cannot expect much
structure. In this paper we study a certain subspaces of the space
of these forms which have a lot more structure. 

\section{Iterated Integrals}

Let $X$ be a smooth manifold with a point $x_0$.  Let $P(X)=P(X, x_0)$
denote the space of loops on $X$ based at $x_0$, namely, continuous functions 
$$\gamma:[0,1] \rightarrow X, \gamma(0)=\gamma(1)=x_0.$$
A function $\phi:P(X) \rightarrow R$, where $R$ is a ring, is said to be a {\em homotopy
functional} if $\phi$ depends only on the homotopy class of
$\gamma$. That is,  it defines a function on $\Gamma=\pi_1(X, x_0)$
or equivalently an element of $\Hom_{Ab Grps}(\ZZ[\Gamma],R)$

Let $\ED(X)$ denote the de Rham complex of smooth forms on $X$. It is a differential graded algebra (dga) -- where a differential graded algebra is a graded algebra $A$ with a degree $1$ map $d:A \longrightarrow A$ such that 
\begin{itemize}
\item $d \circ d=0$
\item $d(a \cdot b) = d(a)\cdot b +(-1)^{\deg(a)} a \cdot d(b)$
\end{itemize}
Let $\w$ be a 1-form in  $\A^1(X)$, where $\AD(X)$ is a sub dga of
$\ED(X)$. The map
$$\gamma \rightarrow \int_{\gamma} \w=\int_{0}^{1} f(t)dt$$
where  $\gamma^*(\w)=f(t)dt$, defines  a function  on $P(X)$.
This defines an element of $\Hom(\ZZ[\Gamma],\R)$ if and only if
$\w$ is closed. Hence this only detects elements of $\Gamma$ visible
in the homology of $X$. It vanishes on $J^2$ as if $(\alpha-1)(\beta-1) \in J^2$, then 
$$\int_{\alpha\beta} \w=\int_{\alpha} \w + \int_{\beta} w$$
so
$$\int_{\alpha\beta} \w -\int_{\alpha} \w -\int_{\beta} \w + \int_{1} \w=0.$$

The iterated integrals studied by Chen \cite{chen} detect more
elements of the group ring.  Suppose $\w_1,\w_2,...,\w_r$ are smooth
1-forms in $\A^1(X)$ and $\gamma$ is a path on $X$. Define
\begin{equation}
\int_{\gamma} \w_1\w_2...\w_r= {\int ...\int}_{0\leq t_1 \leq t_2
...\leq t_r \leq 1} f_1(t_1)f_2(t_2)...f_r(t_r) dt_1dt_2...dt_r
\end{equation}
where $\gamma^{*}(\w_i)=f_i(t)dt$. This defines  a function on the space of paths of $X$  which will be
denoted by  $\int \w_1...\w_r$ and  is called an {\em iterated line
integral of  length $r$}.   A linear combination  of such functions
is called an {\em iterated integral} and  its length is the length
of the longest line integral. Length $0$ iterated integrals are defined to be constant functions. Let $B_s(\AD(X))$ denote the space of iterated integrals of length $\leq s$ coming from forms in $\AD$.

An iterated integral is not necessarily invariant under  homotopy. Chen\cite{chen} formulated a  condition in terms of differential graded algebras under which iterated integrals which are homotopy functionals are closed with respect to a certain differential. However, we have no use for that formalism in what follows so we will not describe it. It does underlie the following notation though. Let $H^0(B_s(\AD(X)), x_0)$ be the space of iterated integrals of length $\leq s$ which are homotopy functionals on loops based at a
point $x_0$ on $X$ modulo those iterated integrals which integrate to $0$ along any path. We shorten this to $H^0(B_s(X), x_0)$ if $\AD(X)$ is $\ED(X)$. 

If $I$ is in $B_s(\AD(X))$ and $\gamma$ in $\Gamma$. Let
$$\displaystyle{<I,\gamma>=\int_{\gamma} I}$$
denote the evaluation map. This can be extended by linearity to all of $\ZZ[\Gamma]$. Let $H^0(\bar{B}_s(\AD(X), x_0)$ denote the subspace of $H^0(B_s(\AD(X)), x_0)$ such that
$$<I,\eta_{x_0}>=0$$
where $\eta_{x_0}$ denotes the constant loop at $x_0$. Namely, these are iterated integrals with constant term being $0$. 

We have the following propositions that can be found in \cite{hain}.
As we will have to appeal to them several times we find it useful to repeat them  here.

\begin{prop}[Hain\cite{hain}, Proposition 2.9] Let $\int \w_1\dots\w_s$ be an iterated line integral and $\alpha$
and $\beta$ two paths such that $\alpha(1)=\beta(0)$. Then
$$\int_{\alpha\beta} \w_1\dots \w_s=\sum_{i=0}^{s} \int_{\alpha} \w_1\dots
\w_i \int_{\beta} \w_{i+1} \dots \w_s$$
where an empty integral is to be understood as 1. 

\label{alphabeta}
\end{prop}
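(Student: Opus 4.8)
The plan is to prove the formula by directly decomposing the domain of integration. I take the standard parametrization of the concatenation, with $\alpha$ traversed on $[0,\tfrac12]$ and $\beta$ on $[\tfrac12,1]$, and write $(\alpha\beta)^*(\w_j)=g_j(t)\,dt$. With this choice the left-hand side of the claimed identity is the integral of $g_1(t_1)\cdots g_s(t_s)$ over the simplex $\Delta_s=\{0\le t_1\le\cdots\le t_s\le 1\}$.

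The core step is to slice $\Delta_s$ along the hyperplanes $t_j=\tfrac12$. For each $i$ with $0\le i\le s$, let
$$\Delta_s^{(i)}=\{\,0\le t_1\le\cdots\le t_i\le \tfrac12\le t_{i+1}\le\cdots\le t_s\le 1\,\}.$$
Every point of $\Delta_s$ lies in exactly one $\Delta_s^{(i)}$ up to a set of measure zero (the index $i$ records how many of the increasing coordinates fall below $\tfrac12$), so the integral over $\Delta_s$ is the sum over $i$ of the integrals over the $\Delta_s^{(i)}$. Each $\Delta_s^{(i)}$ is the product of a simplex in the variables $(t_1,\dots,t_i)\in[0,\tfrac12]^i$ and a simplex in the variables $(t_{i+1},\dots,t_s)\in[\tfrac12,1]^{s-i}$, and on it the integrand factors as $\big(g_1(t_1)\cdots g_i(t_i)\big)\big(g_{i+1}(t_{i+1})\cdots g_s(t_s)\big)$. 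Hence by Fubini the integral over $\Delta_s^{(i)}$ is the product of $\int\cdots\int_{0\le t_1\le\cdots\le t_i\le 1/2}g_1\cdots g_i$ with $\int\cdots\int_{1/2\le t_{i+1}\le\cdots\le t_s\le 1}g_{i+1}\cdots g_s$.

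Finally I would identify the two factors. Restricting the chosen parametrization of $\alpha\beta$ to $[0,\tfrac12]$ recovers $\alpha$ precomposed with the affine map $t\mapsto 2t$; carrying out the substitution $t_j\mapsto 2t_j$ in the first factor turns it into $\int_\alpha\w_1\cdots\w_i$, and likewise $t_j\mapsto 2t_j-1$ turns the second factor into $\int_\beta\w_{i+1}\cdots\w_s$. The extreme cases $i=0$ and $i=s$ contribute the empty integrals, equal to $1$ by convention, which is exactly the convention in the statement, so summing over $i$ gives the asserted identity. (One could alternatively argue by induction on $s$ using the recursion $\frac{d}{du}\int_{\gamma|_{[0,u]}}\w_1\cdots\w_s=\big(\int_{\gamma|_{[0,u]}}\w_1\cdots\w_{s-1}\big)g_s(u)$, but the simplex picture is more transparent.) I expect the only point genuinely needing care to be the slicing step: verifying that the pieces $\Delta_s^{(i)}$ partition $\Delta_s$ up to measure zero and that the integrand truly separates on each piece, so that Fubini applies cleanly.
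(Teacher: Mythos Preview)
Your argument is correct and is the standard proof of this classical concatenation formula for iterated integrals. The paper itself does not supply a proof: the proposition is merely quoted from Hain \cite{hain}, Proposition 2.9, and then used. The simplex-slicing argument you give (partitioning $\Delta_s$ according to how many coordinates lie in $[0,\tfrac12]$, then applying Fubini and reparametrizing) is exactly the argument one finds in Hain's exposition and in Chen's original work, so there is nothing to compare.
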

As a corollary one has,
\begin{cor}
 If $\alpha_i$ are loops and $\beta(0)=\alpha_i(1)$, then
$$\int_{\prod_k (\alpha_k-1)\beta} \w_1\dots \w_s=\sum_{i=1}^{s} \int_{\prod_k (\alpha_k-1)} \w_1\dots \w_{i} \int_{\beta} \w_{i+1}\dots \w_s$$
\label{alphabetacor}
\end{cor}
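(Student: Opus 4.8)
The plan is to derive Corollary \ref{alphabetacor} from Proposition \ref{alphabeta} by induction on the number of loops $\alpha_k$ appearing in the product, combined with the multilinearity of the pairing $\langle I, \cdot\rangle$ in the group-ring variable. The key observation is that $\prod_k(\alpha_k - 1)$ is an element of $J^m$ (where $m$ is the number of factors), so the evaluation map $\int_{(\,\cdot\,)\beta}\w_1\dots\w_s$ must be interpreted via its linear extension to $\ZZ[\Gamma]$; once one expands the product $\prod_k(\alpha_k-1)$ into an alternating sum of group elements, the whole identity reduces to repeated application of the concatenation formula of Proposition \ref{alphabeta}.

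First I would treat the base case of a single loop $\alpha$. Expanding, $\int_{(\alpha-1)\beta}\w_1\dots\w_s = \int_{\alpha\beta}\w_1\dots\w_s - \int_{\beta}\w_1\dots\w_s$. Applying Proposition \ref{alphabeta} to the first term gives $\sum_{i=0}^{s}\int_{\alpha}\w_1\dots\w_i\int_{\beta}\w_{i+1}\dots\w_s$, and the $i=0$ term is $\int_{\beta}\w_1\dots\w_s$ (empty first integral equals $1$), which cancels the subtracted term. The leftover is $\sum_{i=1}^{s}\int_{\alpha}\w_1\dots\w_i\int_{\beta}\w_{i+1}\dots\w_s$, but since $\int_{1}\w_1\dots\w_i = 0$ for $i\geq 1$ (an iterated integral of positive length along the constant path vanishes), we may freely replace $\int_{\alpha}\w_1\dots\w_i$ by $\int_{(\alpha-1)}\w_1\dots\w_i$ in each summand, giving exactly the claimed formula for one loop.

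For the inductive step, write $\prod_{k=1}^{n}(\alpha_k-1) = (\alpha_n-1)\cdot\mu$ where $\mu = \prod_{k=1}^{n-1}(\alpha_k-1)$, and think of $\mu\beta$ as a $\ZZ[\Gamma]$-linear combination of paths all ending where $\alpha_n$ begins. I would apply Proposition \ref{alphabeta} (extended linearly) with the first path being $\alpha_n - 1$ and the second being $\mu\beta$: the $i=0$ term again cancels because of the $(\alpha_n - 1)$ factor having augmentation zero, leaving $\sum_{i=1}^{s}\int_{(\alpha_n-1)}\w_1\dots\w_i\int_{\mu\beta}\w_{i+1}\dots\w_s$. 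Then apply the inductive hypothesis to each factor $\int_{\mu\beta}\w_{i+1}\dots\w_s = \int_{\prod_{k=1}^{n-1}(\alpha_k-1)\beta}\w_{i+1}\dots\w_s$ and reorganize; a bookkeeping check shows the nested sums collapse to the single sum $\sum_{i=1}^{s}\int_{\prod_k(\alpha_k-1)}\w_1\dots\w_i\int_{\beta}\w_{i+1}\dots\w_s$.

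The main obstacle I anticipate is purely combinatorial: making sure the index ranges in the nested double summation telescope correctly and that every ``boundary'' term produced by splitting at position $0$ or position $s$ is accounted for, using repeatedly that positive-length iterated integrals vanish on constant loops. There is no analytic or geometric subtlety here — the entire content is the shuffle-type behavior of Proposition \ref{alphabeta} together with the augmentation-ideal structure, so the proof should be short once the induction is set up carefully.
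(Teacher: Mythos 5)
Your argument is correct, and it fills in exactly the computation the paper treats as immediate (the corollary is stated with no proof beyond ``as a corollary one has''): expand $\prod_k(\alpha_k-1)$ by linearity, apply Proposition \ref{alphabeta} to each concatenation, and observe that the $i=0$ boundary terms cancel because each factor $(\alpha_k-1)$ has augmentation zero while positive-length iterated integrals vanish on constant loops. Your bookkeeping in the inductive step does close up correctly (the interchanged double sum collapses because $\int_{\mu}$ of an empty word is the augmentation of $\mu$, which is $0$), so this is a complete proof along the intended lines.
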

Let 
$$p_s:H^0(B_s(X), x_0) \longrightarrow \bigotimes^s H^1(X,\C)$$
be defined as follows. For $\alpha_1 \otimes \dots \otimes \alpha_s \in \otimes H_1(X,\C)$, where $\alpha_i$ are loops based at $x_0$, 
$$p_s(I)( \otimes_{i=1}^{s} \alpha_i)=\left < I,\prod(\alpha_i-1) \right >$$

\begin{prop}[Hain\cite{hain}, Prop 2.10, Prop 2.13] 

If $\w_1 \dots \w_r$ are smooth one-forms on $X$ and $\alpha_1,\dots \alpha_s$ are loops
based at $x_0$ then
\begin{equation}
\left<\int \w_1\dots \w_r,\prod_{i=1}^{s} (\alpha_i-1)
\right>=\begin{cases} \prod_{i=1}^{s} \int_{\alpha_j} \w_j \text
{ if  r=s}\\
0 \text{ if r $<$ s}. \end{cases} \label{period}
\end{equation}
\label{hainproduct}
\end{prop}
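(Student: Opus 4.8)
The plan is to prove the product formula \eqref{period} by induction on $s$, the number of loops, using the composition-of-paths formula of Proposition~\ref{alphabeta} as the engine; the case $s=1$ is essentially the vanishing-on-$J^2$ computation already carried out in the text, extended to longer iterated integrals. First I would fix the iterated line integral $I=\int\w_1\dots\w_r$ and expand $\left<I,\prod_{i=1}^s(\alpha_i-1)\right>$ by writing $\prod_{i=1}^s(\alpha_i-1)=\left(\prod_{i=1}^{s-1}(\alpha_i-1)\right)(\alpha_s-1)$ and applying Corollary~\ref{alphabetacor} with $\beta=\alpha_s$ and then also with $\beta$ the constant loop; subtracting, the terms in which $\w_{i+1}\dots\w_r$ is integrated over $\alpha_s$ survive only through the difference $\int_{\alpha_s}\w_{i+1}\dots\w_r-\int_{\eta}\w_{i+1}\dots\w_r$, and since the iterated integral over the constant loop vanishes unless the length is $0$, this difference is just $\int_{\alpha_s}\w_{i+1}\dots\w_r$ for $i<r$ and $0$ for $i=r$.

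Next I would feed the inductive hypothesis into the surviving sum. After the step above one is left with
$$\left<I,\prod_{i=1}^s(\alpha_i-1)\right>=\sum_{i=0}^{r-1}\left<\int\w_1\dots\w_i,\prod_{j=1}^{s-1}(\alpha_j-1)\right>\int_{\alpha_s}\w_{i+1}\dots\w_r.$$
By induction, the inner bracket on a product of $s-1$ loops vanishes when the length $i$ of the head integral is strictly less than $s-1$ and equals $\prod_{j=1}^{s-1}\int_{\alpha_j}\w_j$ precisely when $i=s-1$. Hence the whole sum collapses to the single term $i=s-1$, which contributes $\prod_{j=1}^{s-1}\int_{\alpha_j}\w_j\cdot\int_{\alpha_s}\w_s\dots\w_r$. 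If $r<s$ then $i=s-1>r-1$ never occurs in the range of summation, so the sum is empty and the bracket is $0$, giving the second case. If $r=s$, the tail $\int_{\alpha_s}\w_s\dots\w_r$ is the ordinary integral $\int_{\alpha_s}\w_s$, and we obtain $\prod_{j=1}^s\int_{\alpha_j}\w_j$, which is the first case. (Here I am reading the stated formula with the cosmetic correction that the index on the integrand in the $r=s$ case should run so that $\w_j$ is integrated over $\alpha_j$.)

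The one point that needs genuine care — and which I expect to be the main obstacle — is bookkeeping the boundary index $i=r$ versus $i=s$ in Corollary~\ref{alphabetacor}: the corollary's sum runs $i=1$ to $s$ over the splitting of the $\w$'s, and one must be careful that the term where \emph{all} of $\w_1\dots\w_r$ goes to the $\prod(\alpha_k-1)$ factor (tail empty) is handled correctly, since an empty tail integral equals $1$ rather than $0$ and does not get killed by subtracting the constant loop. Tracking this degenerate term is exactly what forces the strict inequality $r<s \Rightarrow 0$ rather than $r\le s$, so it must be isolated explicitly rather than absorbed into the generic argument. Once the ranges are pinned down the induction is purely formal, and the base case $s=0$ (or $s=1$) reduces to the elementary identities already displayed in the text.
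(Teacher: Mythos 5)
The paper does not prove this proposition at all: it is imported verbatim from Hain \cite{hain} (Propositions 2.10 and 2.13) and used as a black box, so there is no in-paper argument to compare yours against. Your induction on $s$ is correct and is essentially the standard proof. The key identity
$$\left<\textstyle\int \w_1\dots\w_r,\;\prod_{i=1}^{s}(\alpha_i-1)\right>=\sum_{i=1}^{r-1}\left<\textstyle\int \w_1\dots\w_i,\;\prod_{j=1}^{s-1}(\alpha_j-1)\right>\int_{\alpha_s}\w_{i+1}\dots\w_r$$
does follow from Proposition~\ref{alphabeta}/Corollary~\ref{alphabetacor} exactly as you say: the $i=0$ term dies because the augmentation of $\prod_{j=1}^{s-1}(\alpha_j-1)$ is zero (for $s\ge 2$), and the $i=r$ term dies because its tail is the empty integral, contributing $1$ both over $\alpha_s$ and over the constant loop, so it cancels in the difference. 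Since $r\le s$ forces every surviving index to satisfy $i\le r-1\le s-1$, the inductive hypothesis applies to every term, the sum collapses to $i=s-1$ when $r=s$ and is empty when $r<s$, and the two cases of \eqref{period} follow (with the harmless index correction $\prod_j\int_{\alpha_j}\w_j$ you note). One small quibble: your closing paragraph says the empty-tail term ``does not get killed by subtracting the constant loop,'' which contradicts your own (correct) computation in the second paragraph --- that term is precisely the one that \emph{is} cancelled by the subtraction, via $1-1=0$. This is only a slip of prose; the displayed formula already excludes $i=r$ and the argument is sound.
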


Finally we state another proposition which allows us to reduce the length of an iterated integral if one of the terms is exact. 

\begin{prop}[Hain\cite{hain}, Prop 1.3]
\label{exact}
Suppose $\w_1,\dots \w_s$ are $1$-forms on $X$ and $\gamma$ is a path on $X$.  If $f$ is a function on $M$ then 
\begin{itemize}

\item $$\int_{\gamma} df  \w_1\dots \w_s=\int_{\gamma} (f\w_1)\w_2\dots \w_s - f (\gamma(0))\int_{\gamma} \w_1\dots \w_s.$$

\item $$\int_{\gamma} \w_1\dots \w_{i-1} df  \w_{i}\dots \w_s=\int_{\gamma} \w_1\dots (f\w_{i})\w_{i+1}\dots \w_s$$ 
$$- \int_{\gamma} \w_1\dots (f\w_{i-1})\w_i\dots \w_s$$

\item $$\int_{\gamma} \w_1\dots \w_s df = f(\gamma(1))\int_{\gamma} \w_1\dots \w_s - \int_{\gamma} \w_1\dots \w_{s-1} (f \w_s)$$

\end{itemize}

\end{prop}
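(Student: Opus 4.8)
\emph{Proof proposal.} The plan is to reduce all three identities to elementary computations on the unit interval by pulling everything back along $\gamma$. Write $\gamma^{*}(\w_j)=f_j(t)\,dt$ as in the definition of the iterated line integral, and set $g(t)=f(\gamma(t))$; then the chain rule gives $\gamma^{*}(df)=g'(t)\,dt$, and since pullback is multiplicative on a function times a form we also have $\gamma^{*}(f\w_j)=g(t)f_j(t)\,dt$. With these substitutions each left-hand side becomes an ordinary iterated integral over a simplex in which exactly one of the integrand factors is $g'$, and the proof amounts to integrating out that one variable by the fundamental theorem of calculus and then re-grouping the two resulting terms.

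For the first identity, $\int_{\gamma} df\,\w_1\dots\w_s$ is the integral of $g'(t_0)f_1(t_1)\cdots f_s(t_s)$ over $0\le t_0\le t_1\le\dots\le t_s\le 1$; carrying out the innermost integration $\int_0^{t_1} g'(t_0)\,dt_0=g(t_1)-g(0)$ and splitting, the factor $g(t_1)f_1(t_1)=\gamma^{*}(f\w_1)$ produces $\int_{\gamma}(f\w_1)\w_2\dots\w_s$ while the constant $g(0)=f(\gamma(0))$ produces $f(\gamma(0))\int_{\gamma}\w_1\dots\w_s$. The third identity is the mirror image: the variable $u$ attached to $df$ now runs over $t_s\le u\le 1$, so $\int_{t_s}^{1} g'(u)\,du=g(1)-g(t_s)$, with $g(1)=f(\gamma(1))$ giving $f(\gamma(1))\int_{\gamma}\w_1\dots\w_s$ and $g(t_s)f_s(t_s)=\gamma^{*}(f\w_s)$ giving $\int_{\gamma}\w_1\dots\w_{s-1}(f\w_s)$.

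For the middle identity the form $df$ sits between $\w_{i-1}$ and $\w_i$, so its variable $u$ satisfies $t_{i-1}\le u\le t_i$ in the pulled-back simplex; then $\int_{t_{i-1}}^{t_i} g'(u)\,du=g(t_i)-g(t_{i-1})$, and the two terms are absorbed into the neighbouring factors via $g(t_i)f_i(t_i)=\gamma^{*}(f\w_i)$ and $g(t_{i-1})f_{i-1}(t_{i-1})=\gamma^{*}(f\w_{i-1})$, yielding $\int_{\gamma}\w_1\dots(f\w_i)\w_{i+1}\dots\w_s-\int_{\gamma}\w_1\dots(f\w_{i-1})\w_i\dots\w_s$. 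By linearity all three statements then pass from a single iterated line integral to an arbitrary iterated integral.

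I do not expect a genuine obstacle here; the argument is purely a matter of careful bookkeeping. The only points that need attention are the identification $\gamma^{*}(df)=d(f\circ\gamma)$ (chain rule), the multiplicativity of pullback used to recognise $g\cdot f_j$ as $\gamma^{*}(f\w_j)$, and keeping straight which of the simplex variables is the one being eliminated in each of the three configurations (first slot, interior slot, last slot), together with the corresponding limits of integration that yield the boundary values $g(0)$, $g(t_{i-1})/g(t_i)$, and $g(1)$.
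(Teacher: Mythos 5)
Your argument is correct: pulling back along $\gamma$, using $\gamma^{*}(df)=d(f\circ\gamma)$, and integrating out the single variable attached to $df$ via the fundamental theorem of calculus in each of the three slot positions is exactly the standard proof of this lemma (it is the computation in Hain's paper, which this article simply cites without reproducing a proof). The bookkeeping of the simplex limits ($0\le t_0\le t_1$, $t_{i-1}\le u\le t_i$, $t_s\le u\le 1$) matches the paper's ordering convention $0\le t_1\le\dots\le t_r\le 1$, so nothing further is needed.
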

 This shows there exist non-trivial iterated integrals which integrate to $0$ along any path.

\section{Higher order modular forms of Geometric origin}

Now let $X=\Gamma \backslash \HH$ where $\Gamma$ is an arithmetic subgroup of
$SL_2(\R)$ and let $\pi:\HH \rightarrow X$ be the canonical map. Let $x_0$ be a point on $X$. We
further assume that $\Gamma$ has no elliptic fixed points, so
$\Gamma \simeq \pi_1(X, x_0)$. In this section we define a  subspace
of the space of higher order automorphic forms of weight $0$ for
$\Gamma$ which has an additional structure coming from the 
geometry of the curve $X$.  As $X$ is a curve, it has  the cuspidal compactification 
$\bar{X}=\Gamma \backslash (\HH \cup \CP^1(\Q))$, which is a smooth projective curve. We also define a subspace of this space coming  from $\bar{X}$ 
which generalize the classical cusp forms. Let $D=\Gamma \backslash \CP^1(\Q)$ be the set of cusps -- $\bar{X}=X \cup D$.

We have the following theorem

\begin{thm} Let $I$ be  an iterated integral on $X$  of length $\leq s$ which is a homotopy functional.  Let $x_0$ denote a point on $X$. Let $z_0$ denote a point of $\HH$ lying in the fibre over $x_0$. Then the function $F_{I}$  on $\HH$ defined by
$$F_{I}(z)=\int_{z_0}^z \pi^{*}(I):=\int_{z_0}^{z} I$$
is a higher order modular form of order $s+1$. Here and from now on by abuse of notation we use $I$ to denote both the integral on $X$ as well as its pullback $\pi^*(I)$ on $\HH$.  As before we define an empty integral to be $1$ and hence integrals of iterated integrals of length $0$, namely constants, are just constants. 

Further, this gives a well defined injective linear map
$$H^0(B_s(X), x_0) \stackrel{\Psi}{\longrightarrow} M_0^{s+1}(\Gamma) $$
\label{mainthm}
\end{thm}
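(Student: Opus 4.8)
The plan is to prove the two assertions in turn: that $F_I\in M_0^{s+1}(\Gamma)$, and that $\Psi\colon I\mapsto F_I$ is a well-defined injective linear map. First, $F_I$ is a well-defined function on $\HH$: since $I$ is a homotopy functional, $\int_\gamma\pi^*(I)$ depends only on the homotopy class of the path $\gamma$ rel endpoints, and $\HH$ being simply connected this forces $\int_{z_0}^z\pi^*(I)$ to depend only on $z$ (this is what the notation of the theorem already presupposes). Next, as the weight is $0$, showing $F_I\in M_0^{s+1}(\Gamma)$ means showing $F_I|_0\delta=0$ for every $\delta\in J^{s+1}$; since $J=\langle\gamma-1\rangle$ is spanned over $\ZZ$ by the elements $\gamma-1$, its power $J^{s+1}$ is spanned over $\ZZ$ by the products $\prod_{j=1}^{s+1}(\gamma_j-1)$ with $\gamma_j\in\Gamma$, and since $F_I|_0(-)$ is additive it suffices to treat $\delta$ of this form.

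The key step I would carry out is a path-concatenation identity. Fix a path $q$ in $\HH$ from $z_0$ to $z$, and for each $\gamma\in\Gamma$ a path $p_\gamma$ in $\HH$ from $z_0$ to $\gamma z_0$; then $\widetilde{\gamma}:=\pi\circ p_\gamma$ is a loop at $x_0$ whose class corresponds to $\gamma$ under $\Gamma\simeq\pi_1(X,x_0)$, and $\beta:=\pi\circ q$ runs from $x_0$ to $\pi(z)$. To compute the (already well-defined) number $F_I\bigl((\gamma_{i_1}\cdots\gamma_{i_k})z\bigr)$, integrate $\pi^*(I)$ along the concatenation of $p_{\gamma_{i_1}}$, $\gamma_{i_1}p_{\gamma_{i_2}}$, $\ldots$, $(\gamma_{i_1}\cdots\gamma_{i_{k-1}})p_{\gamma_{i_k}}$, and finally $(\gamma_{i_1}\cdots\gamma_{i_k})q$; this is a path in $\HH$ from $z_0$ to $(\gamma_{i_1}\cdots\gamma_{i_k})z$, and since $\pi\gamma=\pi$ its image under $\pi$ is $\widetilde{\gamma}_{i_1}\cdots\widetilde{\gamma}_{i_k}\cdot\beta$. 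Because iterated integrals are functorial ($\int_\sigma\pi^*(I)=\int_{\pi\circ\sigma}I$ for any path $\sigma$), this gives $F_I\bigl((\gamma_{i_1}\cdots\gamma_{i_k})z\bigr)=\int_{\widetilde{\gamma}_{i_1}\cdots\widetilde{\gamma}_{i_k}\cdot\beta}I$ (up to reversing the order of the $\widetilde{\gamma}$'s, depending on conventions, which is immaterial below). Expanding $\prod_{j=1}^{s+1}(\gamma_j-1)$ and using weight $0$ therefore gives $\bigl(F_I|_0\prod_{j=1}^{s+1}(\gamma_j-1)\bigr)(z)=\int_{\prod_{j=1}^{s+1}(\widetilde{\gamma}_j-1)\cdot\beta}I$, the product on the right being formed in the group ring (with $1$ the constant loop at $x_0$) and the integral extended $\ZZ$-linearly over loops.

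To finish, I would write $I$ as a $\C$-linear combination of iterated line integrals $\int\omega_1\cdots\omega_\ell$ with $\ell\leq s$; a length-$0$ (constant) summand only adds a constant to $F_I$, which lies in $M_0^1\subseteq M_0^{s+1}$ and is harmless. For such a term, Corollary \ref{alphabetacor}, applied with the loops $\widetilde{\gamma}_1,\ldots,\widetilde{\gamma}_{s+1}$ at $x_0$ and the path $\beta$ (so $\beta(0)=x_0=\widetilde{\gamma}_j(1)$), yields
$$\int_{\prod_{j=1}^{s+1}(\widetilde{\gamma}_j-1)\cdot\beta}\omega_1\cdots\omega_\ell=\sum_{i=1}^{\ell}\left(\int_{\prod_{j=1}^{s+1}(\widetilde{\gamma}_j-1)}\omega_1\cdots\omega_i\right)\left(\int_\beta\omega_{i+1}\cdots\omega_\ell\right).$$
Each inner factor equals $\langle\int\omega_1\cdots\omega_i,\ \prod_{j=1}^{s+1}(\widetilde{\gamma}_j-1)\rangle$: an iterated line integral of length $i\leq\ell\leq s$ paired against a product of $s+1$ factors $\widetilde{\gamma}_j-1$, hence $0$ by Proposition \ref{hainproduct}, since the length $i$ is strictly smaller than the number $s+1$ of factors. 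So every summand vanishes, $\bigl(F_I|_0\prod_{j=1}^{s+1}(\gamma_j-1)\bigr)(z)=0$ for all $z$, and by the reduction above $F_I|_0\delta=0$ for all $\delta\in J^{s+1}$; thus $F_I\in M_0^{s+1}(\Gamma)$.

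Finally, $I\mapsto F_I$ is visibly $\C$-linear, and it descends to $H^0(B_s(X),x_0)$ because an iterated integral that integrates to $0$ along every path gives $F_I\equiv 0$. For injectivity, specialize the identity of the second paragraph to $z=z_0$ with $q$ constant: then $\beta$ is the constant loop at $x_0$, so $F_I(\gamma z_0)=\int_{\widetilde{\gamma}}I=\langle I,\gamma\rangle$ for every $\gamma\in\Gamma$ (and $F_I(z_0)=\langle I,1\rangle$ is the constant term of $I$); hence $\Psi(I)=0$ forces $\langle I,\delta\rangle=0$ for all $\delta\in\ZZ[\Gamma]$, so $I$ represents $0$ in $H^0(B_s(X),x_0)$. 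The step I expect to be the main obstacle is the path-concatenation identity of the second paragraph — lifting words in $\Gamma$ to concatenated paths in $\HH$ and tracking how $\pi$ converts them into products of loops based at $x_0$ — together with the observation that Corollary \ref{alphabetacor} and Proposition \ref{hainproduct} get applied to the truncations $\int\omega_1\cdots\omega_i$, which are themselves iterated line integrals so that those statements genuinely apply; the vanishing is then immediate from $i\leq s<s+1$.
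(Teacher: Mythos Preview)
Your proof is correct and follows essentially the same approach as the paper: lift words in $\Gamma$ to concatenated paths, rewrite $F_I|_0\prod(\gamma_j-1)$ as an iterated integral over $\prod(\widetilde{\gamma}_j-1)\cdot\beta$, and then invoke Corollary~\ref{alphabetacor} together with Proposition~\ref{hainproduct}. The only cosmetic difference is that the paper applies $s$ factors $(\gamma_i-1)$, observes the result is a constant in $z$, and then kills it with the $(s+1)$st factor, whereas you apply all $s+1$ factors at once and read off the vanishing directly from the case $r<s$ of Proposition~\ref{hainproduct}; the injectivity arguments are identical.
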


\begin{proof} Let $I$ be  homotopy functional of length $\leq s$. This has an expression of the form 
$$I=\sum_{|J| \leq s} \w_J $$
where $\w_J=\w_{j_1}\dots \w_{j_{r_J}}$ are iterated line integrals.  We need to show that for any 
$\gamma_1,\gamma_2,\dots,\gamma_{s+1} \in \Gamma$,
$$ F_{I}|_{\prod_{i=1}^{s+1} (\gamma_i-1)}(z)=0.$$
Let $\eta$ be a path from $z_0$ to $z$ on $\HH$. If $\gamma$ is a loop on $X$ based at $x_0$, then one has a composite path $\gamma \pi(\eta)$ on $X$ from $x_0$ to $\pi(z)$.  This can be lifted to a unique path on $\HH$ from $z_0$ to $\gamma z$ passing through  $\gamma z_0$.  We denote it by $\gamma\eta$. 

Notice that
$$F_{I}|_{\gamma_1-1}(z)=\int_{z_0}^{\gamma_1 z} I
-\int_{z_0}^{z} I=\int_{\gamma_1\eta} I- \int_{\eta} I=\int_{(\gamma_1-1)\eta} I$$
as 
$$\int_{\gamma z_0}^{\gamma z} I= \int_{z_0}^{z} I$$
since $I$ is $\Gamma$-invariant. 

For each iterated line integral  $\w_J=\w_{j_1}\dots \w_{j_r}$  appearing in $I$ we can apply Corollary \ref{alphabetacor} and Proposition \ref{hainproduct} to get 
$$\int_{\prod_{i=1}^{s}  (\gamma_i-1)\eta}  \w_J=\sum_{i=1}^{s} \int_{\prod_{k=1}^{s}(\gamma_i-1)} \w_{j_1}\dots \w_{j_{i}} \int_{\eta} \w_{j_{i+1}} \dots \w_{j_s}=
\begin{cases} \prod_{i=1}^{s} \int_{\gamma_i} \w_{j_i} \text
{ if  r=s}\\
0 \text{ if r $<$ s}. \end{cases} 
$$
Therefore 
$$F_I|_{\prod_{i=1}^{s} (\gamma_i-1)}(z)=\sum_{|J|=s} \prod_{i=1}^{s} \int_{\gamma_i} \w_{j_i}$$
and this expression is independent of $z$. In particular, applying $\gamma_{s+1}-1$ annihilates it so $F_I|_{J^{s+1}} \equiv 0$. 

It should be remarked that since $I$ is a homotopy functional, $F_I(z)$ does not depend on the path from $z_0$ to $z$ but the iterated line integrals  $\w_J$ need not be homotopy invariant and hence we had to choose paths. However, by construction, the sums of iterated line integrals  that appear in the expression of $F_I|_{\gamma_{1} - 1)}(z)$ as a sum of products of iterated line integrals are homotopy functionals. That is, we can collect terms together such that 
$$
\int_{z_0}^{\gamma_1 z} I
-\int_{z_0}^{z} I=\int_{(\gamma_1-1)\eta} I=\sum_{r=1}^{s} \int_{z_0}^{\gamma_1z_0} I_1^{r} \int_{z_0}^{z} I_2^{r}  $$
where  $I_1^r$ and $I_2^r$ are homotopy functionals. Note that $I_2^s=I_1^0=I$. 

To prove injectivity, suppose $I$ is of order $s$ and  $F_{I} \equiv 0$. Then in particular, for any $\gamma$ in $\Gamma$, 
$$F_I(\gamma z_0)=\int_{z_0}^{\gamma z_0} I=0$$
hence $I$ is $0$ as a homotopy functional. Hence $I=0$ in $H^0(B_s(X), x_0)$.

\end{proof}

\subsection{The space of geometric higher order modular forms}

A higher order automorphic form is said to be of {\em geometric origin} if it lies in the image of the map 
$$\Psi:H^0(B_s(X), x_0) \longrightarrow M_0^{s+1}(\Gamma)$$
The space of such {\em geometric higher order modular forms} will be denoted by $M_{Geom,0}^{s+1}(\Gamma, x_0)$.

We have an inclusion map 
$$i: H^0(B_s(\X), x_0) \longrightarrow H^0(B_s(X), x_0).$$
A geometric higher order modular form is said to be {\em cuspidal} if it  lies in the image of 
$$\Psi \circ i:H^0(B_s(\X), x_0) \longrightarrow M_0^{s+1}(\Gamma).$$
We denote  the space of {\em geometric cuspidal forms} by 
$S_{geom,0}^{s+1}(\Gamma, x_0)$.

If $K$ is in $H^0(B_s(\X),x_0)$ then for all $\sigma$ in $\Gamma_{\ca}$, where $\Gamma_{\ca}$ denotes the stabilizer of the cusp $\ca$,
$$\int_{x_0}^{\sigma x_0} K=a$$ 
where $a$ is a constant. This is because  $K=I+a$, where $I$ is in $H^0(\bar{B}(\X),x_0)$ and $a$ is a constant,  and 
$$\int_{x_0}^{\sigma x_0} I=0$$
as  the loop $\{x_0,\sigma x_0\}$ on $X$ is homotopic to the constant path on $\X$.   

A consequence of this is the following lemma, which we will have occasion to use later. 

\begin{lem} For $K$  in $H^0(B_s(\bar{X}),x_0)$ we have 
$$\int_{z_0}^{\sigma z}K -\int_{z_0}^{z}K=0$$
for all parabolic $\sigma \in \Gamma_{\ca}$ for all cusps $\ca$. In particular, for  a geometric 
higher order cusp form $f$  we have 
$$f|_{\sigma-1}\equiv 0$$
for all parabolic $\sigma \in \Gamma_{\ca}$ for all cusps $\ca$.
\label{cuspforms}
\end{lem}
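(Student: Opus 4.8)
The plan is to reduce the first assertion to the identity $\int_{x_0}^{\sigma x_0} K = a$ established in the text just above, using the path-composition formula of Proposition \ref{alphabeta} together with the way liftings to $\HH$ interact with loops on $X$ (exactly as in the proof of Theorem \ref{mainthm}). Concretely, fix a cusp $\ca$ and a parabolic $\sigma\in\Gamma_{\ca}$, and let $\eta$ be a path from $z_0$ to $z$ in $\HH$. As in the proof of the main theorem, the loop $\{x_0,\sigma x_0\}$ on $X$ composed with $\pi(\eta)$ lifts to the path $\sigma\eta$ on $\HH$ running from $z_0$ to $\sigma z$ through $\sigma z_0$, so that
$$\int_{z_0}^{\sigma z} K - \int_{z_0}^{z} K = \int_{(\sigma-1)\eta} K.$$
Writing $K = \sum_{|J|\le s}\w_J + a$ as a sum of iterated line integrals plus the constant, I would apply Corollary \ref{alphabetacor} to each $\w_J$ to expand $\int_{(\sigma-1)\eta}\w_J$ as $\sum_i \int_{(\sigma-1)}\w_{j_1}\cdots\w_{j_i}\,\int_{\eta}\w_{j_{i+1}}\cdots\w_{j_{r_J}}$. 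The key input is that each partial iterated integral $\int_{\sigma}\w_{j_1}\cdots\w_{j_i}$ appearing here is, after collecting terms, an evaluation of a genuine homotopy functional $I_1$ on $\X$ against $\sigma-1$ — this is precisely the ``collect terms together'' remark at the end of the proof of Theorem \ref{mainthm}. Since the loop $\{x_0,\sigma x_0\}$ is homotopic to the constant loop on the compactified curve $\X$, every such $\int_{\sigma}(\text{homotopy functional on }\X)$ with zero constant term vanishes, and the surviving contribution is exactly the constant $a$ coming from the length-$0$ part, independently of $z$. Hence $\int_{z_0}^{\sigma z}K - \int_{z_0}^{z}K = a - a \cdot 1$ once one is careful with the constant term; more cleanly, since the displayed difference is independent of $z$, one may evaluate at $z = z_0$, giving $\int_{z_0}^{\sigma z_0}K - 0$, and then use that $\int_{z_0}^{\sigma z_0}K$ is the pullback of $\int_{x_0}^{\sigma x_0}K$, which...

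Actually the cleanest route, and the one I would write up, is: the difference $\Delta(z):=\int_{z_0}^{\sigma z}K - \int_{z_0}^{z}K$ equals $\int_{(\sigma-1)\eta}K$, which by the argument in Theorem \ref{mainthm} (for an order-$s$ functional, $s$ factors of $(\gamma_i-1)$ make it constant, so a fortiori a single factor combined with the fact that the loop is nullhomotopic on $\X$) is independent of $z$; evaluating at $z=z_0$ and using $\Delta(z_0)=\int_{z_0}^{\sigma z_0}K$ together with the identity $\int_{x_0}^{\sigma x_0}K = a$ from the text and $\int_{x_0}^{\sigma x_0}I = 0$, one finds $\Delta(z)=0$ identically. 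The second assertion is then immediate: if $f=F_K=\Psi(K)$ is a geometric cusp form with $K\in H^0(B_s(\X),x_0)$, then by definition $(f|_{\sigma-1})(z) = F_K(\sigma z) - F_K(z) = \int_{z_0}^{\sigma z}K - \int_{z_0}^{z}K = \Delta(z) = 0$.

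The main obstacle I anticipate is bookkeeping around the constant term $a$: the naive expansion of $\int_{(\sigma-1)\eta}K$ has to be organized so that the ``homotopy functional on $\X$'' statement applies cleanly — one must work with $I = K - a \in H^0(\bar B_s(\X),x_0)$ throughout and only reintroduce $a$ at the end, and one must invoke the collecting-terms observation from the proof of Theorem \ref{mainthm} rather than claim homotopy invariance of the individual $\w_J$. Everything else is a direct transcription of the machinery already set up: Proposition \ref{alphabeta}, Corollary \ref{alphabetacor}, Proposition \ref{hainproduct}, and the nullhomotopy of $\{x_0,\sigma x_0\}$ on $\X$.
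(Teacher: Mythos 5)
Your argument is correct and is essentially the paper's own proof: both use Proposition \ref{alphabeta} to expand $\int_{z_0}^{\sigma z}K-\int_{z_0}^{z}K$ into a sum of products in which every term carries a factor that is a homotopy functional in $H^0(\bar{B}_s(\X),\cdot)$ with zero constant term integrated over a parabolic loop, and both conclude by observing that such a loop is nullhomotopic on $\X$ so those factors vanish. The only (cosmetic) difference is that you split the composite path at the basepoint, killing the coefficients $\int_{z_0}^{\sigma z_0}I_1^r$ over the loop at $x_0$, whereas the paper splits it at $z$ and kills the factors $\int_{z}^{\sigma z}I_r''$ over the loop at $\pi(z)$; note that your intermediate detour through ``independence of $z$'' and evaluation at $z=z_0$ is unnecessary, since once those coefficients vanish the whole sum is already identically zero.
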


\begin{proof}  Any $K$ in $H^0(B_s(\bar{X}),x_0)$ is of the form $I+a$, where $a$ is a constant and $I$ is in $H^0(\bar{B}(\bar{X}),x_0)$. Hence 
$$\int_{z_0}^{\sigma z}K -\int_{z_0}^{z}K=(a+\int_{z_0}^{\sigma z}I ) - (a+ \int_{z_0}^{z} I)$$
$$=\int_{z_0}^{\sigma z}I - \int_{z_0}^{z} I$$
From Proposition \ref{alphabeta} we have
$$\int_{z_0}^{\sigma z} I=\int_{z_0}^{z} I +  \int_{z}^{\sigma z} I + \sum_{r=1}^{s-1} \int_{z_0}^{z} I_r' \int_{z}^{\sigma z} I_r''$$
Where $I'_r$ and $I''_r$ are certain iterated integrals which are homotopy functionals but of order between $1$ and $s-1$. Since this calculation is happening with respect to forms on $\bar{X}$, $I$, $I_r'$ and $I_r''$ are all  in $H^0(\bar{B}_s(\bar{X}), \pi(z))$. Explicitly, this expansion comes from a repeated application of Propostion \ref{alphabeta} --  so the forms which constitute the $I_r'$s and $I_r''$s  are the forms which constitute $I$ and $I$ is made up of forms on $\X$. Hence all the iterated integrals are in $\bar{B}_s(\X)$. Since we also know they are homotopy invariant, they are in $H^0(\bar{B}_s(\X),\pi(z))$. 

 Since $\sigma \in \Gamma_{\ca}$  one has 
 $$\int_{z}^{\sigma z} I=0 \text{ and } \int_{z}^{\sigma z} I_r''=0$$ 
therefore most of the terms vanish and 
$$\int_{z_0}^{\sigma z} I=\int_{z_0}^{z} I.$$
A geometric cusp form is of the form $f(z)=\int_{z_0}^{z} K$ for some $K \in H^0(B_s(\bar{X}),x_0)$ for some $s$. Hence for all $z$,  
$$f|_{\sigma-1}(z)= \int_{z_0}^{\sigma z}K - \int_{z_0}^{z} K=0$$
\end{proof}

From the work of Chen\cite{chen} there is an isomorphism
$$H^0(B_s(\A^{\bullet}(X)), x_0) \longrightarrow \Hom(\Q[\pi_1(X, x_0)]/J^{s+1},\C)$$ 
and 
$$H^0(\bar{B_s}(\A^{\bullet}(X)), x_0) \longrightarrow \Hom(J/J^{s+1},\C)$$
where $\A^{\bullet}$ is any complex quasi-isomorphic to the de Rham complex $E^{\bullet}$. 
Hence we can relate these spaces of modular forms to quotients of the
group ring of the fundamental group of $X$. This motivates the
phrase `geometric origin'. Special cases of such forms were
considered in \cite{disr}.

The second graded piece of $H^0(\bar{B_s}(X),x_0)$ is isomorphic to the first cohomology group of the
modular curve $X$  --
$$ \Hom(J/J^2,\C) \simeq H^1(X,\C)$$
and this corresponds to the fact that the space of classical modular forms 
of weight $2$ is isomorphic  to the space of second order modular forms 
of weight $0$ via the map $f \rightarrow F(z)=\int_{z_0}^z f(t)dt$. Similarly the classical 
cusp forms of weight $2$ correspond to cusp forms weight $0$ and exact order $2$ via the same map -- though in that case $J$ is augmentation ideal of $\pi_1(\bar{X},x_0)$. 

More generally one can consider the completion of the group ring
with respect to the augmentation ideal 
$$\hat{\Q}[\pi(X, x_0)]=\varprojlim_s \Q[\pi(X,x_0)]/J^s.$$
This is called the Malcev completion of the group ring. The space of all modular forms of
weight $0$ and geometric origin can be interpreted as the dual of
this space.

\subsection{Hodge Structures}

Hain ( and independently, Morgan ) showed that the quotients of the
group ring with respect to powers of the augmentation ideal $J$ have
a mixed Hodge structure. 

\begin{prop}[Hain and Morgan]\cite{hain} If $X$ is an algebraic variety over $\C$ and $x_0$ is a point on $X$, there is a mixed Hodge structure on the space 
$$H^0(B_s(X),x_0)=\Hom(\Q[\pi_1(X,x_0)]/J^{s+1},\C)$$
which is natural with respect to morphisms of pointed varieties. Further, if $X$ is smooth and projective the length and weight filtrations coincide. 

\end{prop}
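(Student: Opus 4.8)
This is a theorem of Hain, with an alternative route through rational homotopy theory due to Morgan, so I only indicate the strategy. The plan is to realize $H^0(B_s(X),x_0)$ as the degree-zero cohomology of a mixed Hodge complex obtained by applying the bar construction to a multiplicative model of the de Rham complex of $X$ carrying its Hodge and weight filtrations, and then to read off the filtrations on $\Hom(\Q[\pi_1(X,x_0)]/J^{s+1},\C)$ through Chen's $\pi_1$-de Rham isomorphism.

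First I would replace the $C^\infty$ de Rham dga $\ED(X)$ by a quasi-isomorphic dga $\AD(X)$ underlying a multiplicative mixed Hodge complex. Concretely, one fixes a smooth compactification with normal crossings boundary $D=\X\setminus X$ and takes a $C^\infty$ model $\AD(X)$ of the logarithmic de Rham complex $E^\bullet_{\X}(\log D)$ — Hain's construction, or the Thom--Whitney functor of Navarro Aznar — equipped with the weight filtration $W$ by order of logarithmic pole along $D$ and the Hodge filtration $F$ by form degree in the $\X$-directions. By Deligne's theory the hypercohomology of $(\AD(X),W,F)$ is the mixed Hodge structure on $H^\bullet(X,\C)$, and this model is functorial, in the homotopy category of mixed Hodge complexes, for morphisms of pointed smooth varieties; this functoriality is what will ultimately yield the asserted naturality.

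Next I would equip the reduced bar construction $B(\AD(X))$, formed using the augmentation at $x_0$, with two filtrations: the tensor-product (hence multiplicative) Hodge filtration, and a weight filtration that combines the internal weight with a shift by the bar length, normalized so that the length-one part reproduces $H^1(X)$ with its weights. The technical core is the lemma, due to Hain, that $(B(\AD(X)),W,F)$ is again a mixed Hodge complex: one checks that the bar differential is strict for $W$ and for $F$ and that the weight spectral sequence degenerates at $E_1$, all of which reduce, via the K\"unneth formula, to the corresponding properties of $\AD(X)$, since every graded piece of the bar complex is a tensor power of the augmentation ideal $\AD(X)^{+}$. Truncating at bar length $s$ gives a mixed Hodge complex whose $H^0$ is, by Chen's $\pi_1$-de Rham theorem, precisely $H^0(B_s(\AD(X)),x_0)\cong\Hom(\Q[\pi_1(X,x_0)]/J^{s+1},\C)$; transporting $W$ and $F$ across this isomorphism defines the mixed Hodge structure, and naturality descends from that of $\AD(X)$ and of the bar construction.

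Finally, for $X=\X$ smooth projective one has $D=\emptyset$, so $\AD(X)$ may be taken to underlie a pure Hodge complex with $H^1(X)$ of weight $1$; then a length-$r$ graded piece of $B(\AD(X))$ is a tensor product of $r$ weight-one pieces, hence has weight exactly $r$, and therefore the weight filtration on $H^0(B_s(X),x_0)$ coincides with the bar-length filtration $B_r$. The main obstacle, in carrying this out in full rather than quoting it, is the first step: producing the multiplicative mixed Hodge complex $\AD(X)$ — the naive logarithmic complex is not literally a sheaf of commutative differential graded algebras over $\Q$ compatibly with basepoints — and then verifying that the bar construction preserves all the mixed Hodge complex axioms. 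Everything after that is formal.
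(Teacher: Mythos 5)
Your sketch follows essentially the same route as the paper's own outline: pass to the logarithmic de Rham complex $E^{\bullet}(X\log D)$ with Deligne's Hodge and weight filtrations, put the induced (length-shifted) filtrations on the bar construction $B_s$, identify $H^0(B_s(X),x_0)$ with $\Hom(\Q[\pi_1(X,x_0)]/J^{s+1},\C)$ via Chen's theorem, and observe that when $D=\emptyset$ the weight filtration reduces to the filtration by length. The extra detail you supply about multiplicative mixed Hodge complexes and $E_1$-degeneration is the content of Hain's theorem being cited, and is consistent with the paper.
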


In particular, this holds for the algebraic curves $X=\Gamma \backslash \HH$ and their compactifications $\bar{X}$. Hence the spaces of geometric higher order modular forms  $M_{Geom,0}^{s+1}(\Gamma, x_0)$ and cusp forms $S_{Geom,0}^{s+1}(\Gamma, x_0)$ also inherit mixed Hodge structures.

A rough outline of how the the Hodge structure is obtained is as follows, at least when $X$ is a smooth curve as in our case. Essentially the same procedure works for a smooth quasi-projective variety. We follow Hain \cite{hain}. Let $\bar{X}$ denote its smooth compactification as above and $D=\bar{X}-X$. Let $E^{\bullet}(X \log D)$ denote the log-complex of smooth forms with log singularities. This complex is quasi-isomorphic to the de Rham complex $E^{\bullet}_{\C}(X)$, hence the result of Chen's \cite{chen} implies that all homotopy functionals can be obtained by iterated integrals of such forms. So it suffices to use such forms to define the Hodge structure. 

One first defines the Hodge structure on the log-complex as follows 
$$F^{p} E^{\bullet} ( X \log D ) =\{ \text{ forms with} \geq p\; dz's\}$$
$$W_{l}E^{\bullet}( X \log D )=\{ \text{ forms with}\leq l\; \frac{dz}{z}'s \}$$
and Deligne \cite{deli} showed that this induces a  Hodge structure on the cohomology of $X$ by defining the Hodge and weight filtrations to be the image of the cohomology of these filtrations in the cohomology of $X$. 

Define the filtrations on $B_s(E^{\bullet}(X \log D))$ as follows 
$$F^p B_s( E^{\bullet}(X \log D))=\text{ Span of } \left\{ \int \w_1\dots \w_r \mid  \w_i \in F^{p_i} \text{ and } \sum_{i=1}^{r} p_i \geq p \right\}$$ 
$$ W_l B_s( E^{\bullet} (X \log D)=\text{ Span of }\left\{ \int \w_1\dots \w_r \mid \w_i \in W_{l_i} \text{ and }  r+\sum_{i=1}^{r} l_i \leq  l \right \}$$ 

The Hodge and weight filtrations on $B_s(E^{\bullet}( X \log D))$ induce filtrations on $H^0( B_s(E^{\bullet}( X \log D)))$ and these define  a mixed Hodge structure. Using the map $\Psi$ we get an mixed Hodge structure on the space of geometric higher order modular forms of weight $0$. 

If $X=\X$ is a smooth projective curve then the weight filtration on $E^{1}(\X)$ is given by 
$$0=W_{-1} \subset W_{0}=E^{1}(\X)$$
hence 
$$W_l B_{s}(E^{\bullet}(\X)) =\begin{cases} B_{l}(E^{\bullet}(\X)) \text{ if } l \leq s.\\ B_s(E^{\bullet}(\X)) \text{ if } l \geq s. \end{cases}$$
Hence one has 
$$Gr_s^{W_{\bullet}} H^0(B_s(E^{\bullet}(\X), x_0)=H^0(B_s(E^{\bullet}(\X)),x_0)/H^0(B_{s-1}(E^{\bullet}(\X)),x_0)$$
so the filtration by length coincides with the weight filtration and the length graded pieces have a pure Hodge structure. 
Hence the space of geometric cusp forms of exact order $s$ has a pure Hodge structure. 

In general that is not true, as for example, if $X$ is not compact and has more than one cusp, 
$$H^0(B_1(X), x_0)/H^0(B_0(X),x_0) \simeq H^1(X,\C)$$ 
does not have a pure Hodge structure as the integral of an Eisenstein series lies in the weight
$2$ graded part of the weight filtration. So one sees that the space of higher  order modular forms of weight $0$ and order exactly $2$ does not have a pure Hodge structure. 

The Hodge structure generalizes the classical Eichler-Shimura Hodge
structure on the space of classical modular forms of weight $2$ as
that can be identified with $H^0(\bar{B}_1(X), x_0)$. In this case the
Hodge structure does not depend on the choice of $x_0$, but in
general it does.

\begin{rem}
More generally one can construct the motive underlying this Hodge
structure as the motive underlying the Hodge structure on the fundamental group is understood. This is described in the paper of  Deligne and Goncharov \cite{dego} Section 3, ( Proposition 3.4 ).  

There they show that the  the Hodge structure on the graded pieces of the group ring of the fundamental group can be realized  as the Hodge structure on the  relative cohomology groups of pairs $(X^s, \cup_{i=0}^{s} X_i )$, 
where 
\begin{itemize}

\item $X^s=X \times \dots \times X$ $s$-times
\item $X_0$ is the sub-variety given by $t_1=x_0$ -- namely $x_0  \times X^{s-1}$
\item $X_i$ is the sub-variety given by $t_i=t_{i+1}$ for $0 < i <s$ -- namely $X^{i-1}\times \Delta \times X^{s-(i+1)}$, where $\Delta$ is the diagonal in $X \times X$ in the $i^{th}$ and $(i+1)^{st}$ places.  
\item $X_s$ is given by $t_s=x_0$ -- namely $X^{s-1} \times x_0$. 

\end{itemize}
We have 
$$H^s(X^s /\cup_{i=0}^{s} X_i,\C) \simeq \Hom(J/J^{s+1},\C)=H^0(\bar{B}_s(X),x_0)$$
so precisely we get the space of weight $0$ geometric modular forms of order $s$ modulo those of order $0$. 
For example, when $s=1$ we have 
$$H^1(X/ \{x_0\},\C) \simeq H^1(X,\C) \simeq \Hom(J/J^2,\C).$$
Hence the motive underlying the Hodge structure on the space of geometric higher order modular forms of weight $0$  and order $s$  and base point $x_0$ is the motive associated to the pair $(X^s,\cup_{i=0}^{s} X_i)$. Namely, to this object one can associate a de Rham, \'{e}tale and Betti realization which are isomorphic when the field of coefficients is large enough. 

\end{rem}
\begin{rem} Classically one way of understanding Hecke operators is as follows. The space of classical modular forms of weight $k$ along with the complex conjugates of the cusp forms is, via the Eichler Shimura map, identified with the cohomology of a local system on the modular curve, and this imposes a Hodge structure on this space. Hecke operators can then be understood as the morphisms of this Hodge structure induced by certain algebraic correspondences called Hecke correspondences. 

One might hope that the same algebraic correspondences would induce morphisms on the Hodge structure of the space of geometric higher order modular forms of weight $0$ thus suggesting a way to define Hecke operators on these forms. However, unfortunately they do not, as, for example, they do not preserve base points and the Hodge structure does depend on the base point for $s>2$. Hence one cannot get  notions of Hecke eigenfunctions or Hecke eigenspaces using Hecke correspondences  and, as things stand, one cannot use this approach to define a motive of a higher order modular form. 
\end{rem}

\subsubsection{Product structure}

Let
$$\hat{M}_{Geom,0}( X, x_0)=\varinjlim_{s} M_{Geom,0}^{s}(X, x_0).$$
This space has a product structure induced by the product structure
of iterated integrals
$$B_{s_1}(X) \otimes B_{s_2}(X) \longrightarrow B_{s_1+s_2-1}(X) $$
Explicitly, this is given by the shuffle product \cite{hain} Lemma 2.11. 

For example, for two closed $1$-forms $\w_1$ and $\w_2$ with
$F_{\w_i}(z)=\int_{z_0}^z \w_i$ we have
$$F_{\w_1}(z)F_{\w_2}(z)=F_{\w_1\w_2}(z)+F_{\w_2\w_1}(z)$$

\section{Higher Weights}

We now consider the problem of constructing higher weight higher
order forms. Let $X$ and $x_0$ be as above. We have the following inductive definition for cuspidal
higher order forms. A higher order form $f$ is said to be cuspidal
if
\begin{itemize}
\item $f|_{\gamma-1}$ is cuspidal
\item $f|_{\phi-1} \equiv 0$ for all parabolic elements $\phi$ of $\Gamma$, that is 
$$\phi \in Ker\{\pi_1(X, x_0)\longrightarrow \pi_1(\bar{X}, x_0)\}.$$
\item $f$ satisfies a cuspidal growth condition -- for all cusps $\ca$ one has 
$$f|_k(\sigma_{\ca})(z) \ll e^{-cy} \text{ as } y \rightarrow \infty
 \text { uniformly in x for some constant } c>0 $$
where  $\sigma_\ca$ be an element of $SL_2(\ZZ)$ such that
$$\sigma_\ca(\infty)=\ca$$

\end{itemize}

A standard way of constructing classical modular forms is by
Poincar\'{e} series. They are defined as follows. Let $\ca$ be a
cusp, and $\sigma_\ca$ as above  so the stabilizer of $\ca$ is
$\Gamma_\ca=\sigma_{\ca}\Gamma_{\infty}\sigma_{\ca}^{-1}.$
Let $m > 0$ be an integer. For $k>2$, the Poincar\'{e} series
$P_{m,\ca}(z)=P_{m,\ca,k}(z)$  of weight $k$ is defined as follows
\begin{equation}
P_{m,\ca}(z)=\sum_{\gamma \in \Gamma_{\ca}\backslash \Gamma}
\frac{e(m\sigma_{\ca}^{-1}\gamma z)}{j(\sigma_{\ca}^{-1}\gamma,z)^k}
\end{equation}
where $e(z)=\exp(2\pi i z)$. This is a cusp form. We can also define
this when $m=0$ where this then gives the Eisenstein series of
weight $k$ corresponding to the cusp $\ca$. The Poincar\'{e} series
and Eisenstein series span the space of modular  forms for $\Gamma$
as one varies $m$ and the cusps and in fact, for a fixed cusp, one
can get a basis for the cusp forms by varying $m$ \cite{sarn}. 

We have the following generalization of Poincar\'{e} series, called
twisted Poincar\'{e} series, which give rise to higher order forms.
For order $2$ this is due to  Goldfeld \cite{gold} and O'Sullivan
\cite{goos}. In what follows we suppress the weight $k$ in the
notation.

\begin{prop} Let $k>2$ be an even integer and let $\I$ be an element
of $H^0(B_s(\X), x_0)$ and $z_0$ a point on $\HH$ lying in the fibre over $x_0$.
Then, for every cusp $\ca$ and non-negative
integer $m$  we get three twisted Poincar\'{e} series. All of these
are weight $k$, order $s+1$ modular forms and when $m>0$ they are
cusp forms.

\begin{itemize}

\item $\displaystyle{P^1_{m,\ca}(z,\I)=\sum_{\gamma \in \Gamma_{\ca}\backslash
\Gamma} \left ( \int_{z_0}^{z} \I \right
)\frac{e(m\sigma_{\ca}^{-1}\gamma
z)}{j(\sigma_{\ca}^{-1}\gamma,z)^k}}$

\item $\displaystyle {P^2_{m,\ca}(z,\I)=\sum_{\gamma \in \Gamma_{\ca}\backslash
\Gamma}\left ( \int_{z_0}^{\gamma z_0} \I \right )
\frac{e(m\sigma_{\ca}^{-1} \gamma
z)}{j(\sigma_{\ca}^{-1}\gamma,z)^k}}$

\item $\displaystyle{P^3_{m,\ca}(z,\I)=\sum_{\gamma \in \Gamma_{\ca}\backslash
\Gamma}\left ( \int_{z}^{\gamma z} \I \right )
\frac{e(m\sigma_{\ca}^{-1}\gamma
z)}{j(\sigma_{\ca}^{-1}\gamma,z)^k}}$

\end{itemize}

Further, there are relations between these modular forms coming from
the fact that the integral  of $\I$ over the path $\{z_0,\gamma z_0\}$ can be expressed, using Proposition \ref{alphabeta}, as a sum of products of integrals over  $\{z_0,z\}$,
$\{z,\gamma z\}$  and $\{\gamma z,\gamma z_0\}$, where $\{z_0,z_1\}$ denotes a path from $z_0$ to $z_1$ in $\HH$. 
\[
\xy
(0,0)*{}; (0,20)*{}**\dir{-}
?(.5)*\dir{>};
(0,20)*{};(20,20)*{}**\dir{-}
?(.5)*\dir{>};
(20,20)*{};(20,0)*{}**\dir{-}
?(.5)*\dir{>};
(20,0)*{};(0,0)*{}**\dir{-}
?(.5)*\dir{<};
(20,-2)*{\gamma z_0};
(0,-2)*{z_0};
(0,22)*{z};
(20,22)*{\gamma z};
%
\endxy
\]

\end{prop}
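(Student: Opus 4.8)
The plan is to treat the three twisted Poincaré series in parallel, since they differ only in which iterated integral factor is attached to each term, and to reduce the two key claims — convergence/weight-$k$ automorphy, and the higher-order property — to the corresponding facts for the untwisted Poincaré series together with the algebraic identities of Section 3. First I would check absolute and locally uniform convergence for $k>2$. Writing $I=\sum_{|J|\le s}\w_J$ as a sum of iterated line integrals, each antiderivative $\int_{z_0}^{z}\w_J$ (or $\int_{z_0}^{\gamma z_0}\w_J$, $\int_{z}^{\gamma z}\w_J$) grows at most polynomially in $z$ and in $\gamma$ — one bounds an iterated integral of length $r$ by a product of ordinary path integrals, each of which is polynomially bounded since the $\w_i$ are smooth forms on the compact curve $\X$ pulled back to $\HH$. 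Hence each twisted term is the untwisted term $e(m\sigma_\ca^{-1}\gamma z)/j(\sigma_\ca^{-1}\gamma,z)^k$ times a polynomially-bounded factor, and the classical estimate that makes $P_{m,\ca}$ converge for $k>2$ absorbs this extra polynomial growth. For $m>0$ the exponential decay $e(m\sigma_\ca^{-1}\gamma z)$ at the cusps likewise dominates the polynomial twist, giving the cuspidal growth condition.

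Next I would verify that each $P^j_{m,\ca}(\cdot,I)$ is a modular form of weight $k$ and order $s+1$, i.e. that $P^j_{m,\ca}\,|_k\,\prod_{i=1}^{s+1}(\gamma_i-1)\equiv 0$. The cleanest route is to relate the twisted series to $F_I$ and to the untwisted series. For $P^2$, the factor $\int_{z_0}^{\gamma z_0}I$ depends only on $\gamma$, so $P^2_{m,\ca}(z,I)=\sum_{\gamma}\langle I,\gamma\rangle\, e(m\sigma_\ca^{-1}\gamma z)/j(\sigma_\ca^{-1}\gamma,z)^k$ is literally a $\ZZ[\Gamma]$-linear combination of the cocycle $\gamma\mapsto\langle I,\gamma\rangle$ against the orbit sum; since $I$ is a homotopy functional of length $\le s$ it is killed by $J^{s+1}$, and applying $|_k(\delta-1)$ for $\delta\in\Gamma$ reindexes the sum and produces the function $\gamma\mapsto\langle I,\delta\gamma\rangle-\langle I,\gamma\rangle=\langle I,(\delta-1)\gamma\rangle$ up to the automorphy factor, so after $s+1$ such operations the coefficient function is $\langle I,J^{s+1}\cdot(\text{something})\rangle=0$. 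For $P^1$ the factor $\int_{z_0}^{z}I=F_I(z)$ is weight $0$ and, by Theorem \ref{mainthm}, of order $s+1$; so $P^1_{m,\ca}(z,I)=F_I(z)\cdot P_{m,\ca}(z)$, and I would apply the higher-order Leibniz rule for $|_k$ under multiplication by a weight-$0$ form, noting that $P_{m,\ca}$ is killed by $J$ and $F_I$ by $J^{s+1}$, which gives order $\le s+1$ for the product. Finally $P^3$ is handled by writing $\int_z^{\gamma z}I$ via Proposition \ref{alphabeta} as $\int_{z_0}^{\gamma z}I-\int_{z_0}^{z}I-\sum_r(\cdots)$, i.e. as $F_I|_{\gamma-1}(z)$ plus lower-length correction terms that are themselves of the $P^1$/$P^2$ type with $I$ replaced by shorter homotopy functionals $I_r',I_r''$; an induction on $s$ then reduces $P^3$ to the already-established cases.

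I would then record the stated relation among $P^1,P^2,P^3$ explicitly: applying Proposition \ref{alphabeta} to the path $\{z_0,\gamma z_0\}$ decomposed as $\{z_0,z\}\cdot\{z,\gamma z\}\cdot\{\gamma z,\gamma z_0\}$, and using $\Gamma$-invariance of $I$ to identify $\int_{\gamma z}^{\gamma z_0}I$ with $\int_z^{z_0}I=-\int_{z_0}^z I$ (up to the shuffle corrections), yields $\int_{z_0}^{\gamma z_0}I$ as a sum of products of $\int_{z_0}^z I$, $\int_z^{\gamma z}I$ and their lower-length companions; multiplying through by $e(m\sigma_\ca^{-1}\gamma z)/j(\sigma_\ca^{-1}\gamma,z)^k$ and summing over $\gamma\in\Gamma_\ca\backslash\Gamma$ gives the identity relating $P^2$ to $P^1$, $P^3$ and twisted Poincaré series attached to the shorter iterated integrals. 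The cuspidality (the inductive conditions in the definition above) then follows: the growth condition was checked in the first step, $f|_{\sigma-1}\equiv 0$ for parabolic $\sigma$ follows because $I\in H^0(B_s(\X),x_0)$ is built from forms on the compactification (Lemma \ref{cuspforms}), and $f|_{\gamma-1}$ is again of the same shape with a shorter $I$, closing the induction.

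The main obstacle I anticipate is the bookkeeping in the order/higher-order computation — precisely, showing that the \emph{shuffle correction terms} produced when one expands $\int_z^{\gamma z}I$ or $\int_{z_0}^{\gamma z_0}I$ do not raise the order beyond $s+1$. One has to keep careful track of the fact that these correction terms are products of \emph{two} iterated integrals whose lengths sum to $\le s$, so when they are fed back into a Poincaré-type sum the relevant augmentation-ideal power only improves; making this rigorous needs the $\ZZ[\Gamma]$-module structure of the slash action together with Corollary \ref{alphabetacor} and Proposition \ref{hainproduct}, exactly as in the proof of Theorem \ref{mainthm}, and an induction on the length $s$. The convergence estimates, by contrast, are routine once the polynomial bound on antiderivatives of iterated integrals is in hand.
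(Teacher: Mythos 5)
Your overall architecture for the automorphy and order statements is essentially the paper's: $P^1$ is the product of $F_I$ with the classical Poincar\'e series, $P^2$ is handled by reindexing the sum over $\gamma$ and using that $I$ annihilates $J^{s+1}$ (the paper packages this as the explicit recursion $P(z,\I)|_{\beta-1}=\bigl(\int_{z_0}^{\beta^{-1}z_0}I\bigr)P(z)+\sum_r\bigl(\int_{z_0}^{\beta^{-1}z_0}I_2^r\bigr)P(z,I_1^r)$ and inducts on $s$, a formula it then reuses for the cuspidality claim), and $P^3$ reduces to the other two by the change-of-basepoint formula. However, there is a genuine gap in your convergence argument. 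You claim the antiderivatives of the iterated integrals grow ``polynomially in $\gamma$'' and that the classical $k>2$ estimate ``absorbs this extra polynomial growth.'' It cannot: the untwisted series is controlled by $E_{\ca}(z,k/2)=\sum_{\gamma} y_{\ca}(\gamma z)^{k/2}$, convergent exactly for $k/2>1$, and since $y_{\ca}(\gamma z)\to 0$ over the coset sum, a twist factor growing like a genuine power $y_{\ca}(\gamma z)^{-N}$ would force $k>2+2N$, not $k>2$. The whole point of the paper's estimate (from \cite{DKMO}, iterated $r$ times) is that $\int_{z_0}^{z}\I\ll \sum_j |\log^{j} y_{\ca}(z)|/j!$, i.e.\ the growth is \emph{poly-logarithmic}, hence $\ll \epsilon^{-r}\bigl(y_{\ca}(z)^{\epsilon}+y_{\ca}(z)^{-\epsilon}\bigr)$ for every $\epsilon>0$; comparison with $E_{\ca}(z,k/2-\epsilon)$ then gives convergence for all $k>2$. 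Without replacing ``polynomial'' by this logarithmic bound your argument yields convergence only for $k$ sufficiently large.

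A second omission: you never check that the summands of $P^2$ and $P^3$ are well defined on $\Gamma_{\ca}\backslash\Gamma$, i.e.\ that $\int_{z_0}^{\sigma\gamma z_0}\I=\int_{z_0}^{\gamma z_0}\I$ for $\sigma\in\Gamma_{\ca}$. This is not automatic and is precisely where the hypothesis $\I\in H^0(B_s(\X),x_0)$ (forms coming from the compactification, Lemma \ref{cuspforms}) first enters; you invoke that lemma only later, for cuspidality. Your cocycle computation for $P^2$ is otherwise sound, but note that the reindexing acts by \emph{right} multiplication, producing $\gamma\mapsto\langle \I,\gamma(\delta^{-1}-1)\rangle$, so you must either use that $J$ is a two-sided ideal or expand $\gamma=(\gamma-1)+1$ before Proposition \ref{hainproduct} applies.
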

\begin{proof}

We first show that the summand is well defined. For $P^1$ there is no question. The arguments for  $P^2$ and $P^3$ are similar and we have the following argument which we give for $P^2$.

It suffices to show that the term $\int_{z_0}^{\gamma z_0} I$ is well defined on $\Gamma_{\ca}\backslash \Gamma$. 
Let $\sigma \in \Gamma_{\ca}$ and $\gamma \in \Gamma$. Since  $I$ is in  $H^0(B_s (\bar{X}),x_0)$ and $\sigma \in \Gamma_{\ca}$, from Lemma \ref{cuspforms}, we have 
$$\int_{z_0}^{\sigma z} I - \int_{z_0}^{z}I =0$$
In particular, if $z=\gamma z_0$ we have 
$$\int_{z_0}^{\sigma \gamma z_0} I - \int_{z_0}^{\gamma z_0} I=0$$
Hence the  summand $\int_{z_0}^{\gamma z_0} I $  is well defined on $\Gamma_{\ca} \backslash \Gamma$. 

We now have a lemma regarding convergence.

\begin{lem} The series $P^{i}(z,\I)$ converge for $k>2$.
\end{lem}
\begin{proof}
For $i=1$, the series $P^1$ is simply a product of the classical
Poincar\'{e} series and $\int_{z_0}^{z} \I$, both of which converge
for $k>2$.  

For $P^3(z,\I)$ the argument is similar to the case of $\I=f$ in
\cite{dios}.  We can assume $I$ is in $H^0(\bar{B}_s(\X),x_0)$ since an element of $H^0(B_s(\X),x_0)$ differs from such an $I$ by a constant, and in the twisted Poincar\'{e} series this amounts to adding a constant multiple of a classical Poincar\'{e} series. 

We use the estimate \cite{DKMO}, Lemma 3 -- for a
classical cusp form $f$  of weight $2$, any cusp $\ca$,
$$\int_{z_0}^{z} f(w)dw \ll |\log (Im(\sigma_{\ca} z))|+1.$$
To simplify exposition, we use the notation $y_{\ca}(z)$ for
$Im(\sigma_{\ca}(z))$. Using the above estimate repeatedly, we have
that for an iterated integral $\I$ of length $r$,
\begin{equation}
\int_{z_0}^{z} \I \ll \frac{|\log^{r}(y_{\ca}(
z))|}{r!}+\frac{|\log^{r-1} (y_{\ca}(z))|}{(r-1)!} \dots +1
\label{est1}
\end{equation}
Observe that
\begin{equation}
\frac{|\log^{r}(y_{\ca}( z))|}{r!}+\frac{|\log^{r-1}(
y_{\ca}(z))|}{(r-1)!} \dots +1 < exp(|\log( y_{\ca} (z))|) < y_{\ca}
(z)+y_{\ca} (z)^{-1} \label{est2}
\end{equation}
as $exp(|\log(x)|)=x$ or $x^{-1}$. Replacing $y_{\ca} (z)$ by
$y_{\ca}( z)^{\epsilon}$, we have, for $0<\epsilon<1$, using that
$\epsilon^{r}<\epsilon$,
\begin{equation}
\int_{z_0}^{z} \I \ll \epsilon^{-r} y_{\ca} (z)^{\epsilon}+y_{\ca}
(z)^{-\epsilon}) \label{est3}
\end{equation}
To apply this to the convergence of the twisted Poincar\'{e} series
we need an estimate for $\int_{z}^{\gamma z} \I$. We have
\begin{equation}
\int_z^{\gamma z} \I = \sum_{j=0}^{r} \int_z^{z_0}
\I_{j}\int_{z_0}^{\gamma z} \I'_{j} \label{exp1}
\end{equation}
where $\I_j$ and $\I'_j$ are iterated integrals of lengths $j$ and
$r-j$ respectively. From \eqref{est3} and \eqref{exp1} we have
\begin{equation}
|\int_z^{\gamma z} \I| \ll
\epsilon^{-r}(r+1)(y_{\ca}(z)^{\epsilon}+y_{\ca}(z)^{-\epsilon})(y_{\ca}(\gamma
z)^{\epsilon}+y_{\ca}(\gamma z)^{-\epsilon}) \label{est4}
\end{equation}
So we have
$$P^3_{\ca}(z,\I) \ll \sum_{\gamma \in \Gamma_{\ca}\backslash \Gamma}\frac{(y_{\ca}(z)^{\epsilon}+y_{\ca}(z)^{-\epsilon})(y_{\ca}(\gamma
z)^{\epsilon}+y_{\ca}(\gamma z)^{-\epsilon})}{j(\gamma,z)^{-k}}$$
We have $y_{\ca}(\gamma z)=\frac{y_{\ca}(z)}{j(\gamma,z)^{2}}$.
Replacing $j(\gamma,z)^{-k}$ by $\left (\frac{y_{\ca}(\gamma
z)}{y_{\ca}(z)}\right )^{k/2}$ in the expression above we get,
$$P^{3}_{\ca}(z,\I) \ll y_{\ca}(z)^{\epsilon-k/2}\left ( \sum_{ \gamma \in \Gamma_{\ca} \backslash \Gamma}
 y_{\ca}(\gamma z)^{k/2+\epsilon} + \sum_{ \gamma \in \Gamma_{\ca} \backslash \Gamma} y_{\ca}(\gamma z)^{k/2-\epsilon} \right )
+ $$
$$ +  y_{\ca}(z)^{-\epsilon-k/2}\left ( \sum_{\gamma \in \Gamma_{\ca}
\backslash \Gamma}
 y_{\ca}(\gamma z)^{k/2+\epsilon} + \sum_{ \gamma \in \Gamma_{\ca} \backslash \Gamma} y_{\ca}(\gamma
 z)^{k/2-\epsilon} \right )$$
The sum
$$E_{\ca}(z,s)=\sum_{\gamma \in \Gamma_{\ca}
\backslash \Gamma}
 y_{\ca}(\gamma z)^{s}$$
is the classical non-holomorphic Eisenstein series for the cusp
$\ca$ and is known to be absolutely convergent in the region
$Re(s)>1$. So as long as $k/2-\epsilon >1$, that is, $k>2$,  our
series will converge.

Using the change of basepoint formula,  $P^2(z,\I)$ can be
expressed as a finite linear combination terms of the form
$P^1(z,\I') P^3(z,\I'')$ for some iterated integrals $\I'$ and
$\I''$, so converges in the same region.

\end{proof}

It remains to show the invariance property.
\begin{lem} The $P^i(z,\I)$ are higher order modular forms of weight $k$
and order $s+1$.
\end{lem}
\begin{proof}

For $P^1$ this is immediate from the earlier proposition, as $\int_{z_0}^{z} \I$ is a
weight $0$ order $s+1$ form, while $P(z)$ is order $1$, weight $k$,
so the product is order $s+1$, weight $k$. For the second one, let
$P(z,\I)=P_{m,\ca}^2(z,\I)$. We will show that $P(z,\I)|_{\beta-1}$ can be expressed as a linear combination of terms of the form $P(z,I')$ where $I'$ is an iterated integral which is a homotopy functional, but of length strictly less than that of $I$. The theorem will then follow by induction. 

We have
$$P(\beta z,\I)j(\beta,z)^{-k}-P(z,\I)$$
$$=\sum_{\gamma \in \Gamma_{\ca}\backslash
\Gamma}\left ( \int_{z_0}^{\gamma z_0} \I \right )
\frac{e(m\sigma_{\ca}^{-1} \gamma \beta
z)}{j(\sigma_{\ca}^{-1}\gamma,\beta z)^k j(\beta,z)^k}- \sum_{\gamma
\in \Gamma_{\ca}\backslash \Gamma}\left ( \int_{z_0}^{\gamma z_0} \I
\right ) \frac{e(m\sigma_{\ca}^{-1} \gamma
z)}{j(\sigma_{\ca}^{-1}\gamma,z)^k}$$
$$=\sum_{\gamma \in \Gamma_{\ca}\backslash
\Gamma}\left ( \int_{z_0}^{\gamma z_0} \I \right )
\frac{e(m\sigma_{\ca}^{-1} \gamma \beta
z)}{j(\sigma_{\ca}^{-1}\gamma \beta ,z)^k} -\sum_{\gamma \in
\Gamma_{\ca}\backslash \Gamma}\left ( \int_{z_0}^{\gamma z_0} \I
\right ) \frac{e(m\sigma_{\ca}^{-1} \gamma
z)}{j(\sigma_{\ca}^{-1}\gamma,z)^k}$$
\begin{equation}
=\sum_{\gamma \in \Gamma_{\ca}\backslash \Gamma}\left (
\int_{z_0}^{\gamma \beta^{-1} z_0} \I \right )
\frac{e(m\sigma_{\ca}^{-1} \gamma
z)}{j(\sigma_{\ca}^{-1}\gamma,z)^k}-\sum_{\gamma \in
\Gamma_{\ca}\backslash \Gamma}\left ( \int_{z_0}^{\gamma z_0} \I
\right ) \frac{e(m\sigma_{\ca}^{-1} \gamma
z)}{j(\sigma_{\ca}^{-1}\gamma,z)^k}. \label{integr}
\end{equation}
We can once more apply  Propostion \ref{alphabeta}  with $\alpha=\{z_0, \gamma
z_0\}$ and $\beta=\{\gamma z_0, \gamma \beta^{-1}z_0\}$. Since
$\{\gamma z_0, \gamma \beta^{-1} z_0\}$ is homotopic to $\{z_0,
\beta^{-1} z_0\}$ on $\Gamma \backslash \HH$, and $\I$ is homotopy
invariant, we obtain:
\begin{equation}
\int_{z_0}^{\gamma \beta^{-1} z_0} \I- \int_{z_0}^{\gamma z_0} \I=
\int_{z_0}^{\beta^{-1}z_0} \I+ \sum_{r=1}^{s-1} \int_{z_0}^{\gamma z_0} I_1^r
\int_{z_0}^{\beta^{-1}z_0} I_2^r
\label{step3}
\end{equation}
where $I_1^r$ and $I_2^r$ are sums of iterated line integrals  appearing in the proof of Theorem \ref{mainthm}. They are homotopy functionals by construction. 

Combining this with \eqref{integr} we have
\begin{equation}
P(\beta z,\I)j(\beta,z)^{-k}-P(z,\I)= \left( \int_{z_0}^{\beta^{-1}z_0} I \right) P(z) +
\sum_{r=1}^{s-1} \left ( \int_{z_0}^{\beta^{-1}z_0} I_2^r \right )P(z, I_1^r) 
\label{betaminus1}
\end{equation}
 This, by induction on $s$, is a higher order
modular form of weight $k$ and order $s$. Hence $P(z,\I)$ is a
higher order modular form of weight $k$ and order $s+1$.

The third type of Poincar\'{e} series is a higher order modular form
as the iterated integral can be expressed in terms of the first two
and products of lower order integrals using the change of basepoint
formula which comes out of Proposition \ref{alphabeta} and the fact
that
$$\int_{z_0}^{z} \I=\int_{\gamma z_0}^{\gamma z} \I$$
for $\gamma \in \Gamma$.

To show that $P_{m,\ca}(z,I)$ is cuspidal for $m>0$ using
\eqref{betaminus1} we have, by induction, that
$P_{m,\ca}(z,I)|_{\beta-1}$ is cuspidal. Further, if $\phi$ is a
parabolic element, $P_{m,\ca}(z,I)|_{\phi-1}=0$ as $\int_{z_0}^{\phi
z_0} I_2^r=0$ as all the $I_2^i$ lie in $H^0(B_s(\bar{X}, x_0))$. In
fact, this condition is also satisfied for $m=0$ when $s>0$.

\end{proof}

This completes the proof that the twisted Poincar\'{e} series give higher order modular forms. 
\end{proof}

For example, when $\I=f$, where $f$ is a weight $2$ cusp form,
either holomorphic or anti-holomorphic, then $P^2(z,f)=-P^3(z,f)$ as
in this case the integrals do not depend on the base point. In
general, however, these forms could be different, but they are
related.

In \cite{joos} certain higher order non holomorphic Eisenstein series are
constructed by twisting Eisenstein series by products of modular symbols. One can also 
twist non-holomorphic Eisenstein series by iterated integrals to get higher order 
non-holomorphic Eisenstein series. The ones constructed by Jorgenson and 
O'Sullivan are then special cases of this construction because the product of modular 
symbols can be expressed as a sum of iterated integrals via the 
shuffle product of iterated integrals. 

\subsection{Weight 2}

The case of weight $2$ modular forms requires a little more delicate
handling as the Poincar\'{e} series do not converge.  An approach to resolving 
this is to use the ideas of Diamantis and O'Sullivan \cite{dios}.
They overcome this problem by defining it as a function obtained as
a special value of the analytic continuation of a certain Poincare
series with an additional factor which makes it converge.

Precisely, for an integer $m$ and a cusp $\ca$, define
$$\displaystyle{\MZ_{m,\ca}(z,s,\I)=\sum_{\gamma \in
\Gamma_{\ca}\backslash \Gamma} \left ( \int_{z_0}^{\gamma z_0} \I
\right )\frac{e(m\sigma_{\ca}^{-1}\gamma
z)Im(\sigma_{\ca}^{-1}\gamma
z)^s}{j(\sigma_{\ca}^{-1}\gamma,z)^2}}$$
As a function of $s$ this has an analytic continuation to the entire
complex plane. In particular, one can put $s=0$ and the resulting
function
$$P_{m,\ca}(z,\I)=\MZ_{m,\ca}(z,0,\I)$$
is weight $2$, order $s+1$ modular form. The argument is similar
to that of Diamantis and Sim \cite{disi}. However, as the details are complicated, we
will not deal with this case in the remaining part of this  paper.

\subsection{Spaces of higher order and higher weight modular forms}

In the previous sections we  constructed some examples of higher
weight, higher order forms. We would like to define the space
$M_{geom,k}^{s+1}(\Gamma, x_0)$ to be the largest space we can get
from the constructions above. For that we first define a space of
{\em primitive} forms.

The space of {\em primitive cusp} forms $SPM_{geom,k}^{s+1}
(\Gamma, x_0)$ is defined to be the space spanned by the forms
$P_{m,\ca}(z,\I)$ and their complex conjugates over all cusps $\ca$
and all positive integers $m$. The space of {\em primitive modular}
forms $PM_{geom,k}^{s+1}(\Gamma, x_0)$ is then the space spanned by
$SPM_{geom,k}^{s+1}$ and the Eisenstein series $E_{\ca}(z,\I)=P_{0,\ca}(z,\I)$ over
all cusps $\ca$ where $\I$ is in $H^0(B_s(\X),z_0)$. As the space of
smooth modular forms of order $s$ and weight $k$  is finite
dimensional, this space is finite dimensional. In weight $0$ 
all the  forms constructed above as anti-derivatives of iterated integrals  are said to be primitive.

There is a product structure on the space of modular forms of
higher order. This was first introduced by O'Sullivan. If $F$ is a
modular form of weight $k_1$ and order $s_1$ and $G$ is a modular
form of weight $k_2$ and order $s_2$, then from \cite{dich}, the
Rankin-Cohen bracket for $N=0$, we have that $FG$ is a modular form
of weight $k_1+k_2$ and order $s_1+s_2-1$.

We can see this easily in the weight $0$ case -- The product of two
iterated integrals of orders $s_1$ and $s_2$, which correspond to
modular forms of order $s_i+1$, is an iterated integral of order
$s_1+s_2$ whose anti-derivative is an modular form of order
$s_1+s_2+1=(s_1+1)+(s_2+1)-1$.

So we finally define the space of geometric higher order modular
forms of order $s$ and weight $k$, $M_{geom,k}^s(\Gamma, x_0)$  to be the
algebra generated by the primitive forms and similarly the space of
geometric higher order cusp forms $SM_{geom,k}^s(\Gamma, x_0)$ to be the
subalgebra generated by the primitive cusp forms. A weight $k$ form, therefore, is a sum 
of products of lower weight primitive forms. To study this space in more detail  we have to 
include  the space of weight $2$ higher order forms which, as mentioned above,  requires more delicate 
handling, so in what follows we will only consider  the primitive spaces.

\subsection {Hodge Structures}

We can define an {\em ad hoc} Hodge structure on the spaces of primitive modular
forms, $PM_{geom,k}^s$. The weight and Hodge  filtration are defined
as follows. Recall that on $H^0(B_s(\X), x_0)$, the (Hodge) weight
filtration and filtration by length coincide. Define
\begin{itemize}
\item $W_{l} PM_{geom,k}^{s+1}=\text{ Span of  }\left\{ P_{m,\ca,k}(z, I), \bar{P}_{m,\ca,k}(z, I) \text { and } E_{\ca,k}(z, K) \right\}$ such that $I \in W_{l-(k-1)} H^0(B_s(\X), x_0),  m >0$ and $K \in W_{l-k} H^0(B_s(\X), x_0)$ and 
where  all the Poincare and Eisenstein series are of weight $k$.

\item $F^p PM_{geom,k^{s+1}}=\text{ Span of } \left\{ P_{m,\ca,k}(z,I),  \bar{P}_{m,\ca,k}(z, J) \text{ and } E_{\ca,k}(z,K)  \right \}$ such that $m >0$,  $I \in F^{p-(k-1)} H^0(B_s(\X), x_0)$,   $J \in F^{p} H^0(B_s(\X), x_0)$ 
and   $K \in F^{p-(k/2)} H^0(B_s(\X), x_0)$.
\end{itemize}
So, for example, if $k=4$, $s=2$, the weight filtration on
$PM_{geom,4}^{3}$ is as follows
\begin{itemize}
\item $W_{0}=W_1=W_2=0$.
\item $W_3=$ Span of $P_{m,\ca}(z)$ and $\bar{P}_{m,\ca}(z)$ =
holomorphic and anti-holomorphic cusp forms of weight $4$.
\item $W_4=$ Span of Eisenstein series $E_{\ca}(z)$ of weight $4$ and span of
$P_{m,\ca}(z,I)$ and $\bar{P}_{m,\ca}(z,I)$, where $I$ is in
$H^0(B_1(\X),z_0)$.
\item $W_5=$ Span of $W_4$,$P_{m,\ca}(z,I),\bar{P}_{m,\ca}(z,I)$ where $I$
is in $H^0(B_2(\X), x_0)$ and $E_{\ca}(z,J)$ where $J$ is in
$H^0(B_1(\X),z_0)$.
\item $W_6=PM_{geom,4}^{3}=W_{i}, i\geq 6=$ Span of $W_5$ and $E_{\ca}(z,J)$ where $J$ is in
$H^0(B_2(\X), x_0$.
\end{itemize}
and the Hodge filtration is given as follows.
\begin{itemize}
\item $F^0=PM_{geom,4}^{3}$
\item $F^1=$ Span of $\bar{P}_{\m,\ca}(z,I)$, where $I \in F^1
H^0(B_2(\X), x_0)$, $P_{\m,\ca}(z,J)$ where $J \in H^0(B_2(\X), x_0)$
and $E_{\ca}(z,K)$ where $K \in H^0(B_2(\X), x_0)$
\item $F^2=$ Span of $\bar{P}_{\m,\ca}(z,I)$, where $I \in  F^2
H^0(B_2(\X), x_0)$,$P_{\m,\ca}(z,J)$ where $J \in H^0(B_2(\X), x_0)$
and $E_{\ca}(z,K)$ where $K \in H^0(B_2(\X), x_0)$.
\item $F^3=$ Span of $P_{\m,\ca}(z,J)$ where $J \in H^0(B_2(\X), x_0)$
and $E_{\ca}(z,K)$ where $K \in F^1 H^0(B_2(\X), x_0)$.
\item $F^4=$ Span of $P_{\m,\ca}(z,J)$ where $J \in F^1
H^0(B_2(\X), x_0)$ and $E_{\ca}(z,K)$ where $K \in F^2
H^0(B_2(\X), x_0)$.
\item $F^5=$ Span of $P_{\m,\ca}(z,J)$ where $J \in F^2
H^0(B_2(\X), x_0)$.
\item $F_i=0\;\; i \geq 6$.
\end{itemize}

We expect that this Hodge structure can be used to define a Hodge
structure on the full space of geometric higher order forms using
the fact that it is the algebra generated by these forms, but since
that requires the weight $2$ case as well, we will not consider it
here.

At the moment it is not clear whether there is some natural geometric structure underlying this Hodge structure, and so one cannot say anything about naturality or functoriality. However, there is some recent work of Anton Dietmar \cite{diet} relating higher order forms with Lie algebra cohomology and one might hope  that this Hodge structure is related to a natural Hodge structure on those cohomology groups.

\bibliographystyle{alpha}

\bibliography{HOMFRef}

\begin{thebibliography}{DKMO06}

\bibitem[CD06]{dich}
Y.~Choie and N.~Diamantis.
\newblock Rankin-{C}ohen brackets on higher order modular forms.
\newblock In {\em Multiple Dirichlet series, automorphic forms, and analytic
  number theory}, volume~75 of {\em Proc. Sympos. Pure Math.}, pages 193--201.
  Amer. Math. Soc., Providence, RI, 2006.

\bibitem[CDO02]{chdios}
G.~Chinta, N.~Diamantis, and C.~O'Sullivan.
\newblock Second order modular forms.
\newblock {\em Acta Arith.}, 103(3):209--223, 2002.

\bibitem[Che71]{chen}
Kuo-tsai Chen.
\newblock Algebras of iterated path integrals and fundamental groups.
\newblock {\em Trans. Amer. Math. Soc.}, 156:359--379, 1971.

\bibitem[Del71]{deli}
Pierre Deligne.
\newblock Th\'eorie de {H}odge. {II}.
\newblock {\em Inst. Hautes \'Etudes Sci. Publ. Math.}, (40):5--57, 1971.

\bibitem[DG05]{dego}
Pierre Deligne and Alexander~B. Goncharov.
\newblock Groupes fondamentaux motiviques de {T}ate mixte.
\newblock {\em Ann. Sci. \'Ecole Norm. Sup. (4)}, 38(1):1--56, 2005.

\bibitem[Die08]{diet}
Anton. Dietmar.
\newblock Higher order cohomology of arithmetic groups.
\newblock {\em arXiv:0805.0703v2 [math.NT]}, 2008.

\bibitem[DKMO06]{DKMO}
N.~Diamantis, M.~Knopp, G.~Mason, and C.~O'Sullivan.
\newblock {$L$}-functions of second-order cusp forms.
\newblock {\em Ramanujan J.}, 12(3):327--347, 2006.

\bibitem[DO06]{dios}
N~Diamantis and C.~O'Sullivan.
\newblock The dimension of spaces of holomorphic second-order cusp forms and
  their cohomology.
\newblock {\em Preprint}, 2006.

\bibitem[DS06a]{disi}
N~Diamantis and D.~Sim.
\newblock The classification of spaces of higher order cusp forms.
\newblock {\em Preprint}, 2006.

\bibitem[DS06b]{disr}
N.~Diamantis and R.~Sreekantan.
\newblock Iterated integrals and higher order automorphic forms.
\newblock {\em Comment. Math Helv}, 81:481--494, 2006.

\bibitem[GO03]{goos}
Dorian Goldfeld and Cormac O'Sullivan.
\newblock Estimating additive character sums for {F}uchsian groups.
\newblock {\em Ramanujan J.}, 7(1-3):241--267, 2003.

\bibitem[Gol99]{gold}
Dorian Goldfeld.
\newblock Zeta functions formed with modular symbols.
\newblock In {\em Automorphic forms, automorphic representations, and
  arithmetic (Fort Worth, TX, 1996)}, volume~66 of {\em Proc. Sympos. Pure
  Math.}, pages 111--121. Amer. Math. Soc., Providence, RI, 1999.

\bibitem[Hai87]{hain}
Richard~M. Hain.
\newblock The geometry of the mixed {H}odge structure on the fundamental group.
\newblock In {\em Algebraic geometry, Bowdoin, 1985 (Brunswick, Maine, 1985)},
  volume~46 of {\em Proc. Sympos. Pure Math.}, pages 247--282. Amer. Math.
  Soc., Providence, RI, 1987.

\bibitem[JO06]{joos}
J.~Jorgenson and C.~O'Sullivan.
\newblock Unipotent vector bundles and higher-order non-holomorphic eisenstein
  series.
\newblock {\em Preprint}, 2006.

\bibitem[KZ03]{klza}
Peter Kleban and Don Zagier.
\newblock Crossing probabilities and modular forms.
\newblock {\em J. Statist. Phys.}, 113(3-4):431--454, 2003.

\bibitem[Sar90]{sarn}
Peter Sarnak.
\newblock {\em Some applications of modular forms}, volume~99 of {\em Cambridge
  Tracts in Mathematics}.
\newblock Cambridge University Press, Cambridge, 1990.

\end{thebibliography}

\noindent Ramesh Sreekantan\\ 
TIFR Centre for Applicable Mathematics\\
Post Bag No. 6503\\
Sharada Nagara, Chikkabommasandra\\ 
Bangalore, 560 065, Karnataka, India \\
Email: ramesh\@@math.tifr.res.in

\end{document}